\documentclass[12pt]{gtpart}
\usepackage[cp1251]{inputenc}
\usepackage[english]{babel}
\usepackage{morefloats}
\usepackage{amsmath}
\usepackage{amsfonts}
\usepackage{mathrsfs}
\usepackage{amssymb}
\usepackage{graphicx}
\usepackage{multirow}
\usepackage[perpage,para,symbol*]{footmisc}
\usepackage[usenames,dvipsnames]{color}
\newtheorem{theorem}{\textsf{Theorem}}
\newtheorem{lemma}{\textsf{Lemma}}
\newtheorem{proposition}{\textsf{Proposition}}
\vbadness10000 \hbadness10000 \tolerance10000
\def\text{\hbox} \def\newpage{\vfill\break}

\begin{document}

\title{
\date{}
{
\large \textsf{\textbf{On the optimality of the ideal right-angled $24$-cell}}
}
}
\author{\small Alexander Kolpakov}

\begin{abstract}\noindent

We prove that among four-dimensional ideal right-angled hyperbolic polytopes the $24$-cell is of minimal volume and of minimal facet number.  As a corollary, a dimension bound for ideal right-angled hyperbolic polytopes is obtained.

\medskip\noindent
{\textsf{\textbf{Key words}}: Coxeter polytope, right-angled Coxeter group.}

\noindent\textsf{\textbf{MSC (2010)}} 20F55; 52B11, 51M20
\end{abstract}
\maketitle

\parindent=0pt

\section{Introduction}

In this work, we consider the $24$-cell $\mathscr{C}$, that is a four-dimensional regular ideal hyperbolic polytope with Schl\"{a}fli symbol $\{3,4,3\}$ with all dihedral angles right. The polytope $\mathscr{C}$ has $24$ octahedral facets, $96$ triangular faces, $96$ edges and $24$ cubical vertex figures. We shall show that it provides a solution to the following problems in the class of ideal right-angled polytopes in $\mathbb{H}^4$:
\begin{itemize}
\item[\textbf{I:}] Find a polytope of minimal volume,
\item[\textbf{II:}] Find a polytope of minimal facet number.
\end{itemize}

Since Coxeter's work \cite{Coxeter}, the $24$-cell is known  for its nice combinatorial and geometric structure in the Euclidean sense. We demonstrate that it possesses optimal properties \textbf{I} and \textbf{II} in the hyperbolic setting. Question \textbf{I} is closely related to the volume spectrum of four-dimensional hyperbolic manifolds \cite{Ratcliffe}, question \textbf{II} is new and is both of combinatorial and geometric nature. Furthermore, using the results of \cite{Khovanskii, Nikulin}, we obtain a new dimension bound for ideal right-angled hyperbolic polytopes. The case of right-angled hyperbolic polytopes with both proper and ideal vertices was considered before in \cite{Dufour, VinbergPotyagailo}.

\medskip
\textbf{Acknowledgement.} This paper is a part of the author's Ph.D. thesis project supervised by Prof. Ruth Kellerhals. The author is grateful to her for inspiration, knowledge and support and to the referee for useful comments and suggestions. This work was supported by the Schweizerischer Nationalfonds SNF no.~200020-121506/1 and no.~200021-131967/1.

\section{Preliminaries}

Let $\mathscr{P}$ be \textit{a polytope} in the hyperbolic $n$-dimensional space $\mathbb{H}^n$, that means $\mathscr{P} = \bigcap_{i\in I} H_i^{-}$, $|I|<\infty$, with finitely many half-spaces $H_i^{-} \subset \mathbb{H}^n$ enclosed by the respective hyperplanes $H_i$. In particular, $\mathscr{P}$ is convex. The part of the polytope $\mathscr{P}$ contained in such a hyperplane is called \textit{a facet} and has dimension $n-1$. We define the low-dimensional faces of $\mathscr{P}$ by induction as non-empty intersections of the higher-dimensional ones. Let us denote the set of $k$-dimensional faces by $\Omega_k(\mathscr{P})$, $0 \leq k < n$, $\Omega_n(\mathscr{P}):=\mathscr{P}$. We call the $k$-dimensional faces of $\mathscr{P}$ for $k=0$ and $1$, its \textit{vertices} and \textit{edges}, respectively. Let $\mathrm{f}_k$ denote the number of $k$-dimensional faces and let $\mathbf{f}(\mathscr{P}) = (\mathrm{f}_0, \cdots, \mathrm{f}_{n-1})$ be \textit{the face vector} of the polytope $\mathscr{P}$.

The faces of a given $n$-polytope $\mathscr{P}$ form a lattice $\Omega(\mathscr{P}) := \bigcup_{k=0}^{n} \Omega_k(\mathscr{P})$ ordered by the set-theoretical face inclusion. Call two polytopes $\mathscr{P}_1$ and $\mathscr{P}_2$ combinatorially isomorphic if their face lattices $\Omega(\mathscr{P}_1)$ and $\Omega(\mathscr{P}_2)$ are isomorphic. Call $\mathscr{P} \subset \mathbb{H}^n$ \textit{a regular} hyperbolic polytope if it is combinatorially isomorphic to a regular $n$-dimensional Euclidean polytope and all the dihedral angles of $\mathscr{P}$ in its co-dimension two faces are equal. Recall that there are infinitely many regular polygons. Dimension three provides five Platonic solids. There exist six regular four-dimensional polytopes. Starting from dimension five, there are only three combinatorial types of convex regular polytopes (see \cite[Table~I]{Coxeter}).

A polytope is called \textit{non-obtuse} if all the dihedral angles in its co-dimension two faces are not greater than $\pi/2$. A polytope is \textit{right-angled} if all these dihedral angles equal $\pi/2$. Call a hyperbolic polytope $\mathscr{P}$ \textit{ideal} if all its vertices are ideal points of $\mathbb{H}^n$, i.e. all of them belong to $\partial \overline{\mathbb{H}^n}$. A polytope $\mathscr{P} \subset \mathbb{H}^n$ is \textit{simple} if each vertex belongs to $n$ facets only, and $\mathscr{P}$ is called \textit{simple at edges} if each edge belongs to $n-1$ facets only. An infinitesimal link (w.r.t. the Euclidean metric on $\overline{\mathbb{H}^n}$) of a vertex of a polytope is called its \textit{vertex figure} (w.r.t. the given vertex). Every vertex figure of a compact non-obtuse hyperbolic polytope is a combinatorial simplex of co-dimension one \cite[p.~108, Theorem~1.8]{Vinberg}. Every vertex figure of an ideal right-angled hyperbolic polytope is a combinatorial cube of co-dimension one \cite[Proposition~1]{Dufour}. Thus, a compact non-obtuse hyperbolic polytope is simple and an ideal right-angled hyperbolic polytope is simple at edges.

The $24$-cell considered as a regular polytope has the Schl\"{a}fli symbol $\{3,4,3\}$, octahedral facets $\{3,4\}$ and cubical vertex figures $\{4,3\}$. Note that this is the only regular four-dimensional polytope having each vertex figure a cube \cite[Table~I]{Coxeter} and thus realisable as an ideal right-angled hyperbolic one. We denote it by $\mathscr{C}$ and call \textit{the hyperbolic $24$-cell}.

\section{The $24$-cell and volume minimality}

\begin{lemma}[Combinatorial identities]\label{lemma_combinatorics}
Let $\mathscr{P} \subset \mathbb{H}^4$ be an ideal right-angled polytope with face vector $\mathbf{f}(\mathscr{P}) = (\mathrm{f}_0, \mathrm{f}_1, \mathrm{f}_2, \mathrm{f}_3)$. Then the following combinatorial identities hold.

\begin{equation}\label{eqEuler}
\mathrm{f}_0 - \mathrm{f}_1 + \mathrm{f}_2 - \mathrm{f}_3 = 0,
\end{equation}
\begin{equation}\label{eqVertexEdge}
\mathrm{f}_1 = 4\, \mathrm{f}_0,
\end{equation}
\begin{equation}\label{eqVertexSum}
12\, \mathrm{f}_0 = \sum_{F \in \Omega_2(\mathscr{P})} \mathrm{f}_0(F).
\end{equation}
\end{lemma}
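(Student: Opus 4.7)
The plan is to derive each identity from elementary incidence counting, using as the single geometric input the fact already recalled in the preliminaries (from \cite{Dufour}) that every vertex figure of an ideal right-angled polytope in $\mathbb{H}^n$ is a combinatorial $(n-1)$-cube. Specialising to $n=4$, each vertex figure of $\mathscr{P}$ is a combinatorial $3$-cube, which has $8$ vertices, $12$ edges and $6$ two-faces.

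For \eqref{eqEuler}, I would simply invoke the Euler--Poincar\'e relation for convex polytopes: in dimension $n$ it reads $\sum_{k=0}^{n-1}(-1)^k \mathrm{f}_k = 1-(-1)^n$, which in dimension $4$ yields precisely the claimed vanishing of the alternating sum. This applies since the face lattice of $\mathscr{P}$ is combinatorially that of a convex $4$-polytope, regardless of whether its vertices are proper or ideal points of $\overline{\mathbb{H}^4}$.

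For \eqref{eqVertexEdge} and \eqref{eqVertexSum}, I would apply the standard dictionary between the vertex figure at a vertex $v$ and the faces of $\mathscr{P}$ through $v$: a $k$-dimensional face of the vertex figure corresponds bijectively to a $(k+1)$-dimensional face of $\mathscr{P}$ incident to $v$. Since the vertex figure is a $3$-cube, each vertex of $\mathscr{P}$ lies on exactly $8$ edges, $12$ two-faces and $6$ facets. Double-counting vertex-edge incidences gives $2\,\mathrm{f}_1 = 8\,\mathrm{f}_0$, hence \eqref{eqVertexEdge}. For \eqref{eqVertexSum}, counting pairs $(v,F)$ with $v$ a vertex of a $2$-face $F$: summing over vertices gives $12\,\mathrm{f}_0$, while summing over $2$-faces gives $\sum_{F \in \Omega_2(\mathscr{P})} \mathrm{f}_0(F)$.

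No substantive obstacle is anticipated: the only non-trivial ingredient is the cubical vertex figure theorem, which is cited from the literature. Once that is available, the three identities reduce to the Euler--Poincar\'e formula and two routine double-counting arguments; the mild care required is to notice that ideal vertices are still genuine $0$-dimensional faces of the combinatorial polytope, so that all vertex counts go through unchanged.
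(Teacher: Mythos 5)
Your proposal is correct and follows essentially the same route as the paper: the Euler--Poincar\'e relation for the boundary sphere gives \eqref{eqEuler}, and the cubical vertex figure theorem of Dufour combined with two double-counting arguments (vertex--edge incidences for \eqref{eqVertexEdge}, vertex--two-face incidences for \eqref{eqVertexSum}) gives the other two identities. Nothing is missing.
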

\begin{proof}
We list the proofs of (\ref{eqEuler})-(\ref{eqVertexSum}) below in the respective order.

\medskip
\textrm{(1)} This is Euler's identity. Since $\mathscr{P}$ is a convex four-dimensional polytope, its surface $\partial\mathscr{P}$ is homeomorphic to $\mathbb{S}^3$. Hence, for the Euler characteristic of $\partial\mathscr{P}$, we have $\mathrm{f}_0 - \mathrm{f}_1 + \mathrm{f}_2 - \mathrm{f}_3 =: \chi(\partial\mathscr{P}) = \chi(\mathbb{S}^3) = 0$.

\medskip
\textrm{(2)} Let $v \in \Omega_0(\mathscr{P})$ be a vertex. Then each vertex figure $\mathscr{P}_v$ is a cube, since $\mathscr{P}$ is an ideal right-angled polytope \cite[Proposition~1]{Dufour}. The vertices of $\mathscr{P}_v$ correspond to the edges of $\mathscr{P}$ emanating from a given vertex $v\in \Omega_0(\mathscr{P})$. This means that eight edges are adjacent at $v$. On the other hand, each edge has two vertices. Thus, we obtain $2\,\mathrm{f}_1 = 8\,\mathrm{f}_0$ and (\ref{eqVertexEdge}) follows.

\medskip
\textrm{(3)} The edges of the vertex figure $\mathscr{P}_v$ correspond to the two-dimensional faces of $\mathscr{P}$ meeting $v$. Thus, twelve two-dimensional faces meet at each vertex. Hence, if we sum up the number of vertices $\mathrm{f}_0(F)$ over all the two-dimensional faces $F\in\Omega_2(\mathscr{P})$, we count each vertex of $\mathscr{P}$ twelve times. Then the desired formula follows and the lemma is proven.
\end{proof}

\begin{lemma}[Volume formula]\label{lemma_volume}
Let $\mathscr{P} \subset \mathbb{H}^4$ be an ideal right-angled polytope with face vector $\mathbf{f}(\mathscr{P}) = (\mathrm{f}_0, \mathrm{f}_1, \mathrm{f}_2, \mathrm{f}_3)$. Then its volume equals
\begin{equation*}
\mathrm{Vol}\,\mathscr{P} = \frac{\mathrm{f}_0 - \mathrm{f}_3 + 4}{3}\, \pi^2.
\end{equation*}
\end{lemma}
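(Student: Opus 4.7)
The plan is to apply the Gauss--Bonnet--Chern formula for finite-volume hyperbolic $4$-orbifolds to the Coxeter orbifold $\mathcal{O}=\mathbb{H}^4/\Gamma$, where $\Gamma<\mathrm{Isom}\,\mathbb{H}^4$ is the discrete group generated by reflections in the facets of $\mathscr{P}$. Because $\mathscr{P}$ is right-angled, $\Gamma$ is a well-defined Coxeter group and $\mathscr{P}$ is a fundamental domain for its action; thus $\mathrm{Vol}(\mathscr{P})=\mathrm{Vol}(\mathcal{O})$, and Gauss--Bonnet--Chern yields
$$\mathrm{Vol}(\mathscr{P})=\frac{4\pi^2}{3}\,\chi^{orb}(\mathcal{O}).$$

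Next I would compute $\chi^{orb}(\mathcal{O})$ combinatorially by stratifying $\mathscr{P}$ into its open faces. Since $\mathscr{P}$ is simple at edges (by the preliminaries), each open face of codimension $c\in\{1,2,3\}$ lies in exactly $c$ facets, so its isotropy in $\Gamma$ is the elementary abelian $2$-group $(\mathbb{Z}/2)^c$ of order $2^c$. An ideal vertex, however, has an infinite Euclidean crystallographic stabilizer acting on its horospherical cube link, and so contributes nothing to $\chi^{orb}$ (equivalently, the horospherical cross-section is a closed flat $3$-orbifold of Euler characteristic zero, and the cusp contributes zero in the Gauss--Bonnet--Chern formula for cusped hyperbolic orbifolds). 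Summing the contributions $(-1)^{\dim F}/|\mathrm{Stab}(F)|$ over the open faces gives
$$\chi^{orb}(\mathcal{O})=1-\frac{\mathrm{f}_3}{2}+\frac{\mathrm{f}_2}{4}-\frac{\mathrm{f}_1}{8}.$$

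Finally I would invoke Lemma \ref{lemma_combinatorics}: identity (\ref{eqVertexEdge}) gives $\mathrm{f}_1=4\mathrm{f}_0$, and combining (\ref{eqEuler}) with (\ref{eqVertexEdge}) gives $\mathrm{f}_2=3\mathrm{f}_0+\mathrm{f}_3$. Substituting into the expression for $\chi^{orb}(\mathcal{O})$ and collecting terms produces $\chi^{orb}(\mathcal{O})=(\mathrm{f}_0-\mathrm{f}_3+4)/4$, whence the claimed volume formula.

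The main delicate point is the vanishing of the cusp contribution to the Gauss--Bonnet--Chern integral, since $\mathcal{O}$ is only of finite volume, not compact. A self-contained alternative, which I would use if a direct appeal to the cusped version is deemed insufficient, is to truncate each ideal vertex by a small horoball (its link being a Euclidean cube), apply Gauss--Bonnet to the resulting compact orbifold with totally geodesic and horospherical boundary, and then let the horoballs shrink, checking that the horospherical boundary terms vanish in the limit because the cross-sectional Euclidean $3$-orbifold has $\chi^{orb}=0$.
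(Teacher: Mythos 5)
Your argument is correct and arrives at the right formula, but it takes a somewhat different route from the paper. The paper quotes Zehrt's covolume formula (Corollary~4.2 of \cite{Zehrt}), which expresses $\mathrm{coVol}\,\mathscr{G}$ through the quantity $\kappa(\mathscr{P})=4-2(\mathrm{f}_0+\mathrm{f}_2)+\sum_{F\in\Omega_2(\mathscr{P})}\mathrm{f}_0(F)$ and sums over two-faces and vertices weighted by $1/|\mathrm{Stab}(\cdot)|$; it then substitutes $|\mathrm{Stab}(F)|=4$, $|\mathrm{Stab}(v)|=\infty$ and needs all three identities of Lemma~\ref{lemma_combinatorics}, including $12\,\mathrm{f}_0=\sum_F\mathrm{f}_0(F)$, to simplify. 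You instead invoke the Gauss--Bonnet--Chern theorem for finite-volume hyperbolic $4$-orbifolds directly, compute $\chi^{orb}(\mathbb{H}^4/\Gamma)$ by stratifying $\mathscr{P}$ into open faces with stabilizers $(\mathbb{Z}/2)^c$ in codimension $c$ (using simplicity at edges and the right angles), and discard the cusps. Your stratification computation and the final substitution are correct ($\chi^{orb}=1-\mathrm{f}_3/2+\mathrm{f}_2/4-\mathrm{f}_1/8=(\mathrm{f}_0-\mathrm{f}_3+4)/4$), and you only need identities (\ref{eqEuler}) and (\ref{eqVertexEdge}), not (\ref{eqVertexSum}). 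The trade-off is that you must justify the cusped version of Gauss--Bonnet--Chern (or carry out the horoball truncation you sketch), whereas the paper delegates exactly this analytic content to Zehrt's result --- which is itself a Gauss--Bonnet-type computation, so the two proofs are morally the same theorem packaged differently. Your version is more self-contained and slightly leaner combinatorially, provided the orbifold Gauss--Bonnet identity $\mathrm{Vol}=\tfrac{4\pi^2}{3}\chi^{orb}$ for cusped $4$-orbifolds is accepted or properly referenced.
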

\begin{proof}
Let $\mathscr{G}$ be the group generated by reflections in the supporting hyperplanes of the facets of $\mathscr{P}$. The group $\mathscr{G}$ is a Coxeter group acting discretely on $\mathbb{H}^4$ with Poincar\'{e} polytope $\mathscr{P}$. Hence $\mathrm{Vol}\,\mathscr{P} = \mathrm{coVol}\,\mathscr{G}$. 
By \cite[Corollary~4.2]{Zehrt}, we obtain 
\begin{equation}\label{eqZehrt}
\mathrm{coVol}\,\mathscr{G} = \frac{\pi^2}{3} \left( \kappa(\mathscr{P}) - 2 \sum_{F\in \Omega_2(\mathscr{P})} \frac{\mathrm{f}_0(F) - 2}{|Stab(F)|} + 4 \sum_{v\in \Omega_0(\mathscr{P})} \frac{1}{|Stab(v)|} \right),
\end{equation}
where $\kappa(\mathscr{P}) := 4 - 2 (\mathrm{f}_0 + \mathrm{f}_2) + \sum_{F\in \Omega_2(\mathscr{P})} \mathrm{f}_0(F)$.
Here $Stab(\star) \leqslant \mathscr{G}$ means the stabilizer of $\star$, where $\star$ is a two-dimensional face $F$ or a vertex $v$ of $\mathscr{P}$.

\smallskip
For each $F\in\Omega_2(\mathscr{P})$ the group $Stab(F)$ is generated by two reflections in the facets meeting at $F$. Since $\mathscr{P}$ is a right-angled polytope, the group $Stab(F)$ is generated by reflections in two orthogonal mirrors. Thus,  $Stab(F)\simeq D_2$, the dihedral group of order four. Each vertex $v\in\Omega_0(\mathscr{P})$ is ideal, i.e. belongs to $\partial \overline{\mathbb{H}^4}$, so the stabilizer $Stab(v)$ is a Coxeter group generated by reflection in the faces of a Euclidean cube. Hence $Stab(v)$ is infinite.

\smallskip
We obtain the desired volume formula by substituting the values $|Stab(F)| = 4$, $|Stab(v)| = \infty$ into (\ref{eqZehrt}) and by applying the formulas of Lemma~\ref{lemma_combinatorics} for the computation.
\end{proof}

\medskip
The hyperbolic $24$-cell $\mathscr{C}$ has $\mathrm{f}_0=\mathrm{f}_3=24$, $\mathrm{f}_1=\mathrm{f}_2=96$, see \cite[Table~I, (ii)]{Coxeter}. Hence, by the lemma above, its volume equals $4\pi^2/3$.

\begin{theorem}[Minimal volume]\label{theorem_min_volume}
A four-dimensional ideal right-angled hyperbolic polytope of minimal volume is $\mathscr{C}$, up to an isometry.
\end{theorem}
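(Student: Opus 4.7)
The plan is to combine Lemmas~\ref{lemma_combinatorics} and~\ref{lemma_volume} with the elementary observation that every $2$-face of $\mathscr{P}$ has at least three vertices. My aim is to prove $\mathrm{f}_3 \leq \mathrm{f}_0$, so that Lemma~\ref{lemma_volume} yields $\mathrm{Vol}\,\mathscr{P} \geq 4\pi^2/3 = \mathrm{Vol}\,\mathscr{C}$, and then to show that equality forces $\mathscr{P}$ to be isometric to $\mathscr{C}$.

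Combining Euler's identity (\ref{eqEuler}) with (\ref{eqVertexEdge}) one gets $\mathrm{f}_2 - \mathrm{f}_3 = \mathrm{f}_1 - \mathrm{f}_0 = 3\mathrm{f}_0$, i.e.\ $\mathrm{f}_2 = 3\mathrm{f}_0 + \mathrm{f}_3$. On the other hand, since $\mathrm{f}_0(F) \geq 3$ for every $F \in \Omega_2(\mathscr{P})$, identity (\ref{eqVertexSum}) yields $12\mathrm{f}_0 = \sum_F \mathrm{f}_0(F) \geq 3\mathrm{f}_2$, so $\mathrm{f}_2 \leq 4\mathrm{f}_0$. Substituting, $3\mathrm{f}_0 + \mathrm{f}_3 \leq 4\mathrm{f}_0$, whence $\mathrm{f}_3 \leq \mathrm{f}_0$, and the volume bound follows from Lemma~\ref{lemma_volume}.

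Equality forces every $2$-face of $\mathscr{P}$ to be a triangle. Each facet $\Pi$ of $\mathscr{P}$ is then an ideal right-angled $3$-polytope: at each vertex of $\Pi$ the vertex figure is a two-dimensional face of the cubical vertex figure of $\mathscr{P}$, which is a rectangle (the cube being a rectangular parallelepiped by virtue of its right dihedral angles), so all dihedral angles of $\Pi$ equal $\pi/2$. With each vertex of $\Pi$ on four edges and each $2$-face a triangle, Euler's formula pins down $(\mathrm{f}_0(\Pi),\mathrm{f}_1(\Pi),\mathrm{f}_2(\Pi)) = (6,12,8)$, so $\Pi$ is combinatorially an octahedron; by rigidity of ideal right-angled hyperbolic $3$-polytopes, $\Pi$ is isometric to the regular ideal octahedron.

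The main obstacle is the concluding step: showing that a four-dimensional ideal right-angled polytope whose facets are all regular ideal octahedra must be isometric to $\mathscr{C}$. The plan is to apply the Poincar\'e polytope theorem to recognize the reflection group generated by the facets of $\mathscr{P}$ as a discrete cofinite right-angled Coxeter group in $\mathrm{Isom}(\mathbb{H}^4)$, whose Coxeter diagram is determined by the local combinatorics (octahedral facets meeting at right angles along triangular $2$-faces, cubical vertex figures at ideal points) and coincides with that of the $\{3,4,3\}$ reflection group; rigidity of cofinite reflection groups in $\mathbb{H}^4$ then identifies $\mathscr{P}$ with $\mathscr{C}$ up to isometry.
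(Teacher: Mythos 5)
Your derivation of the volume bound is correct and is essentially the paper's own argument in different packaging: the chain $\mathrm{f}_2 = 3\,\mathrm{f}_0+\mathrm{f}_3$ together with $3\,\mathrm{f}_2 \leq \sum_F \mathrm{f}_0(F) = 12\,\mathrm{f}_0$ is exactly the paper's identity $\mathrm{f}_0-\mathrm{f}_3 = 4\,\mathrm{f}_0-\mathrm{f}_2 = \frac{1}{3}\sum_{k\geq 4}(k-3)\,\mathrm{f}_2(k)\geq 0$ from (\ref{eqf0f3}), and your equality analysis (all two-faces triangular, hence every facet a combinatorial octahedron by the count $(6,12,8)$, hence the ideal right-angled octahedron by Andreev's theorem) coincides with the paper's as well.

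The genuine gap is in your concluding step, which you yourself flag as the main obstacle and only sketch. Asserting that the Coxeter diagram of the reflection group of $\mathscr{P}$ is ``determined by the local combinatorics'' and ``coincides with that of the $\{3,4,3\}$ reflection group'' begs the question: for a right-angled group the diagram records precisely which pairs of facets are adjacent, i.e.\ the \emph{global} facet-adjacency structure of $\mathscr{P}$, and that this structure is that of the $24$-cell is exactly what remains to be proved. Knowing only that every facet is an octahedron and every vertex figure a cube does not yet hand you the diagram; what is missing is the combinatorial fact that a convex $4$-polytope with octahedral facets and cubical vertex figures is combinatorially the $24$-cell. The paper extracts this from Coxeter's classification; alternatively one can argue that the boundary complex, being a simply connected cell complex with octahedral cells and cubical vertex links, must be the regular complex $\{3,4,3\}$. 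Once the combinatorics is pinned down, the isometry with $\mathscr{C}$ follows from uniqueness of the ideal right-angled realization, and neither Poincar\'e's polytope theorem nor Mostow rigidity is needed; as written, your appeal to rigidity is circular because the abstract isomorphism of the two reflection groups presupposes the combinatorial identification.
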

\begin{proof}
Let us consider an ideal right-angled hyperbolic polytope $\mathscr{P} \subset \mathbb{H}^4$. Let $\mathrm{f}_2(k)$ denote the number of its two-dimensional $k$-gonal faces, $k \geq 3$, which are ideal hyperbolic polygons. Then
\begin{equation*}
\mathrm{f}_2 = \mathrm{f}_2(3) + \dots + \mathrm{f}_2(N),
\end{equation*}
where $N = \max_{F \in \Omega_2(\mathscr{P})} \mathrm{f}_0(F) \geq 3$.
By Lemma~\ref{lemma_combinatorics}, formula (\ref{eqVertexSum}), we obtain
\begin{equation*}
12\, \mathrm{f}_0 = \sum_{F \in \Omega_2(\mathscr{P})}\mathrm{f}_0(F) = 3\, \mathrm{f}_2(3) + \dots + N\, \mathrm{f}_2(N).
\end{equation*}
By using Lemma~\ref{lemma_combinatorics}, formulas (\ref{eqEuler})-(\ref{eqVertexEdge}), one subsequently computes
\begin{equation}\label{eqf0f3}
\mathrm{f}_0 - \mathrm{f}_3 = 4 \mathrm{f}_0 - \mathrm{f}_2 = \frac{1}{3} \sum^{N}_{k=4}\,(k-3)\mathrm{f}_2(k) \geq 0.
\end{equation}
Then, by Lemma~\ref{lemma_volume},
\begin{equation*}
\mathrm{Vol}\,\mathscr{P} \geq \frac{4}{3}\,\pi^2 = \mathrm{Vol}\,\mathscr{C}.
\end{equation*}

If $\mathrm{Vol}\,\mathscr{P}$ equals the volume of $\mathscr{C}$, one immediately has
$\mathrm{f}_2(k) = 0$ for all $k \geq 4$ by (\ref{eqf0f3}). This means that all the two-dimensional faces of $\mathscr{P}$ are triangles. Consider a facet $P \in \Omega_3(\mathscr{P})$. Observe that $P \subset \mathbb{H}^3$ is an ideal right-angled polyhedron which has only triangular faces. Then $P$ is a combinatorial octahedron and it is isometric to the right-angled hyperbolic octahedron by \cite[Theorem~3]{Andreev1} and \cite[Theorem~2]{Andreev2}. Hence, all the facets of $\mathscr{P}$ are ideal right-angled octahedra. So the polytope $\mathscr{P}$ is combinatorially isomorphic to a regular four-dimensional Euclidean polytope with octahedral facets only, that is, the $24$-cell by \cite[Table~I, (ii)]{Coxeter}. Thus $\mathscr{P}$ is isometric to $\mathscr{C}$ by Andreev's theorem \cite[Theorem~3]{Andreev1}.
\end{proof}

\section{The $24$-cell and facet number minimality}

\begin{theorem}[Minimal facet number]\label{theorem_min_facet}
The facet number of a four-dimensional ideal right-angled hyperbolic polytope $\mathscr{P}$ satisfies the inequality $\mathrm{f}_3(\mathscr{P}) \geq \mathrm{f}_3(\mathscr{C}) = 24$. Any four-dimensional ideal right-angled hyperbolic polytope $\mathscr{P}$ with $\mathrm{f}_3(\mathscr{P}) = 24$ is isometric to the hyperbolic $24$-cell $\mathscr{C}$.
\end{theorem}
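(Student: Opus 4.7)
The plan is to parallel the proof of Theorem~\ref{theorem_min_volume}, shifting focus from $2$-faces to facets. The three combinatorial identities of Lemma~\ref{lemma_combinatorics} will combine with a vertex-count lower bound for each individual facet.

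First I would observe that each facet $P$ of $\mathscr{P}$ is itself an ideal right-angled hyperbolic polyhedron in $\mathbb{H}^3$, hence its vertex figures are Euclidean rectangles and $P$ is $4$-valent at each ideal vertex. Euler's identity $\mathrm{f}_0(P)-\mathrm{f}_1(P)+\mathrm{f}_2(P)=2$ combined with $2\mathrm{f}_1(P)=4\mathrm{f}_0(P)$ gives $\mathrm{f}_2(P)=\mathrm{f}_0(P)+2$; then $2\mathrm{f}_1(P)\geq 3\mathrm{f}_2(P)$ (each face has at least three edges) yields $\mathrm{f}_0(P)\geq 6$, with equality forcing all faces of $P$ to be triangles. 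In that equality case $P$ is the combinatorial octahedron, and by Andreev's theorem it is isometric to the ideal right-angled hyperbolic octahedron.

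Next I would exploit the cube vertex figure: every vertex of $\mathscr{P}$ lies in exactly $6$ facets, so
\begin{equation*}
6\,\mathrm{f}_0 \;=\; \sum_{P\in\Omega_3(\mathscr{P})} \mathrm{f}_0(P) \;\geq\; 6\,\mathrm{f}_3,
\end{equation*}
which yields $\mathrm{f}_0\geq\mathrm{f}_3$, with equality precisely when every facet is an ideal right-angled octahedron. In the equality case, all $2$-faces of $\mathscr{P}$ are triangles (octahedron faces), so the closing paragraph of the proof of Theorem~\ref{theorem_min_volume} applies verbatim and forces $\mathscr{P}$ to be combinatorially a regular $4$-polytope with octahedral facets---the $24$-cell---hence by Andreev's theorem isometric to $\mathscr{C}$, giving $\mathrm{f}_3=24$.

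The main obstacle is the strict case $\mathrm{f}_0>\mathrm{f}_3$, where one still has to prove $\mathrm{f}_3\geq 24$. My plan here is to combine the identity $\mathrm{f}_2=3\mathrm{f}_0+\mathrm{f}_3$ (obtainable from Lemma~\ref{lemma_combinatorics} via facet-side counting) with the convexity bound $\mathrm{f}_2\leq\binom{\mathrm{f}_3}{2}$ and the lower bound $\mathrm{f}_2\geq 4\mathrm{f}_3$ arising from each facet having at least $8$ two-faces, supplemented by the additional information coming from the $3\mathrm{f}_0$ pairs of opposite facets at the cube vertex figures. Elementary application of these inequalities alone yields only the weaker bound $\mathrm{f}_3\geq 9$; extracting the sharp value $24$ is the delicate step and will have to exploit the full rigidity of the cubical vertex link, presumably together with the structural fact that a single non-octahedral facet forces additional combinatorial complexity to propagate through the neighbouring cells of $\mathscr{P}$.
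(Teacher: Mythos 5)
Your preliminary reductions are correct and do overlap with the paper's machinery: the identity $\mathrm{f}_2(P)=\mathrm{f}_0(P)+2$ for each facet, the bound $\mathrm{f}_0(P)\geq 6$ with equality characterizing the octahedron, the consequence $6\,\mathrm{f}_0=\sum_{P}\mathrm{f}_0(P)\geq 6\,\mathrm{f}_3$, and the treatment of the all-octahedral case by importing the closing argument of Theorem~\ref{theorem_min_volume}. But the statement you must prove is that $\mathrm{f}_3(\mathscr{P})=24$ forces $\mathscr{P}\cong\mathscr{C}$, and your equality case is $\mathrm{f}_0=\mathrm{f}_3$, not $\mathrm{f}_3=24$; a priori a polytope could have $\mathrm{f}_3=24$ while possessing a non-octahedral facet (hence $\mathrm{f}_0>24$). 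So both the lower bound $\mathrm{f}_3\geq 24$ and the uniqueness claim hinge entirely on showing that the presence of a single non-triangular two-face forces $\mathrm{f}_3\geq 25$. You explicitly defer this ("the delicate step"), and your proposed toolkit --- $\mathrm{f}_2=3\mathrm{f}_0+\mathrm{f}_3$, $\mathrm{f}_2\leq\binom{\mathrm{f}_3}{2}$, $\mathrm{f}_2\geq 4\mathrm{f}_3$ --- provably cannot close it, as you note it only yields $\mathrm{f}_3\geq 9$. This is a genuine gap: the deferred step is not a technicality but the entire content of the theorem beyond what Theorem~\ref{theorem_min_volume} already gives.

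For comparison, the paper closes this gap with three ingredients you do not have. First, Proposition~\ref{proposition_flower}: among ideal right-angled polyhedra in $\mathbb{H}^3$ containing a $k$-gonal face, the antiprism $\mathscr{A}_k$ uniquely minimizes the face number, with $\mathrm{f}_2(\mathscr{A}_k)=2k+2$. Second, a convexity argument showing that the facets meeting a fixed facet $P$ along its two-faces and at its vertices are pairwise distinct, which yields the key local bound $\mathrm{f}_3(\mathscr{P})\geq\mathrm{f}_2(P)+\mathrm{f}_0(P)+1=2\,\mathrm{f}_2(P)-1$; this alone settles the case of a $k$-gonal two-face with $k\geq 6$. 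Third, Lemma~\ref{lemma_circuit}, ruling out $(3,0)$, $(4,0)$ and $(2,1)$ circuits of facets via the signature of the Lorentzian Gram matrix; this is what lets one certify that facets counted around two or three adjacent cells ($\mathscr{A}_5$'s in case \textbf{C}, octahedrites with $8$, $9$, $10$ vertices in cases \textbf{D.1}--\textbf{D.3}) are genuinely distinct, pushing the count to $25$ in every non-octahedral configuration. Your closing sentence gestures at exactly this propagation phenomenon, but without the circuit obstruction and the facet-by-facet enumeration of admissible octahedrites, the argument does not exist yet.
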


The proof will be based on Proposition \ref{proposition_flower} and Lemma \ref{lemma_circuit} below. Their proofs will be given in Section~3.3.

\medskip
\textbf{3.1 Three-dimensional ideal right-angled hyperbolic polyhedra with few faces.} Let $\mathscr{A}_k \subset \mathbb{H}^3$, $k \geq 3$, be an ideal right-angled antiprism depicted in Fig.~\ref{antiprism_k}. In the figure, the leftmost and the rightmost edges are identified, so that the surface of the polyhedron is partitioned into top and bottom $k$-gonal faces and $2k$ triangular faces in the annulus between them. Such an antiprism exists for every $k \geq 3$ and it is unique up to an isometry due to \cite[Theorem~3]{Andreev1}, \cite[Theorem~2]{Andreev2}. 

\smallskip
Antiprisms $\mathscr{A}_k$ will later play the r\^{o}le of possible facets for a four-dimensional ideal right-angled hyperbolic polytope in the proof of Theorem~\ref{theorem_min_facet}.

\begin{proposition}[Antiprism's optimality]\label{proposition_flower}
A three-dimensional ideal right-angled hyperbolic polyhedron of minimal facet number, which has at least one $k$-gonal face, $k \geq 3$, is isometric to the antiprism $\mathscr{A}_k$ with $\mathrm{f}_2(\mathscr{A}_k) = 2k+2$.
\end{proposition}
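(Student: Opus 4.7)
The plan is to enumerate the faces in the closed star of the $k$-gonal face (its \emph{flower}) and then to show that this flower is a proper subcomplex of $\partial P$, producing the required extra face and identifying the equality case with the antiprism.

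\medskip
First I set up the combinatorics. Since each ideal vertex of an ideal right-angled polyhedron $P \subset \mathbb{H}^3$ has Euclidean-rectangular vertex figure, every vertex of $P$ is $4$-valent; combined with Euler's identity on $\partial P \cong \mathbb{S}^2$ and $2\mathrm{f}_1 = 4\mathrm{f}_0$, this yields $\mathrm{f}_1 = 2\mathrm{f}_0$, $\mathrm{f}_2 = \mathrm{f}_0 + 2$, and $\sum_{G\in\Omega_2(P)}\mathrm{f}_0(G) = 4\mathrm{f}_0$. Fixing a $k$-gonal face $F$, the $k$ faces $F_1,\dots,F_k$ sharing an edge with $F$ are pairwise distinct (two faces of a convex polyhedron meet in at most an edge), and at each vertex $v_i$ of $F$ there is a unique further face $D_{v_i}$ completing the $4$-valent link at $v_i$. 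A brief intersection argument excludes coincidences among these $2k$ non-$F$ faces: if $D_{v_i}=D_{v_j}$ for $i \neq j$, then $D_{v_i} \cap F$ would contain two vertices of $F$ and hence be an edge, contradicting the definition of $D_{v_i}$; similarly $D_{v_i} \neq F_j$. Thus the flower of $F$ contains $2k+1$ pairwise distinct faces.

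\medskip
The main step is to produce at least one additional face outside the flower. Removing $F$ together with $F_1,\dots,F_k$ from $\partial P$ leaves an open disk whose boundary is a cycle that alternates between the $v_i$ and the non-$F$ vertices of the $F_i$. The diagonal faces $D_{v_i}$ occupy the corners of this disk near the $v_i$, and the remaining part is bounded by an \emph{inner circuit} $C$ composed of those edges of the $D_{v_i}$ that do not meet $F$. I rule out every way in which $C$ could collapse. In the generic case when every $F_i$ is a triangle, the cycle $C$ passes through the vertices $r_1,\dots,r_k$, where $r_i$ denotes the unique non-$F$ vertex of $F_i$. An identification $r_i = r_{i+1}$ would make the two non-$F$ edges at $v_{i+1}$ share both endpoints, producing a double edge that is impossible in a convex polyhedron. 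An identification $r_i = r_j$ with $|i-j| \geq 2$ modulo $k$ would force this vertex to lie in the two face-adjacent faces $F_i, F_j$ together with the four distinct diagonal faces $D_{v_i}, D_{v_{i+1}}, D_{v_j}, D_{v_{j+1}}$, i.e.\ in at least six faces of $P$, contradicting $4$-valence. When some $F_i$ or $D_{v_i}$ has more than three sides, the inner circuit acquires further internal vertices, which by the same incidence argument only enlarges the enclosed region. In all cases $C$ bounds a non-empty region containing at least one face of $P$ outside the flower, so $\mathrm{f}_2(P) \geq 2k+2$.

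\medskip
In the equality case $\mathrm{f}_2(P) = 2k+2$, exactly one face $F^*$ lies outside the flower. The identity $\sum_G \mathrm{f}_0(G) = 4\mathrm{f}_0 = 8k$ together with the requirement that $F^*$ closes the inner $k$-circuit forces every $F_i$ and every $D_{v_i}$ to be a triangle and $F^*$ to be a $k$-gon, which is precisely the combinatorial type of the antiprism $\mathscr{A}_k$. Andreev's theorem, already cited in the paragraph introducing $\mathscr{A}_k$, then gives the isometry $P \cong \mathscr{A}_k$. I expect the degeneracy analysis of the inner circuit in the second paragraph to be the main obstacle, because it demands combining $4$-valence at every vertex with the absence of double edges to rule out all possible identifications among the circuit vertices, and it requires a brief auxiliary argument in the case where some $F_i$ or $D_{v_i}$ is larger than a triangle.
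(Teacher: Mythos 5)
Your overall strategy --- count the $2k+1$ faces in the flower of $F$, show that one further face is needed to close up $\partial\mathscr{P}$, then combine $\mathrm{f}_2 = \mathrm{f}_0 + 2$ with face--vertex counting to identify the equality case as the antiprism and invoke Andreev's theorem for rigidity --- is exactly the strategy of the paper's proof. The one place where you genuinely diverge is also where a gap appears: the exclusion of identifications among the side faces. Your incidence arguments (a double edge at $v_{i+1}$ if $r_i = r_{i+1}$; six faces at a $4$-valent vertex if $r_i = r_j$ with $|i-j|\geq 2$) are correct, but they apply only after you know the side faces $F_i$ are triangles, because they use that the shared vertex is joined by edges to both $v_i$ and $v_{i+1}$ and therefore lies in $D_{v_i}$ and $D_{v_{i+1}}$. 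If instead two quadrilateral side faces $F_i$ and $F_j$ shared the vertex $w$ adjacent to $v_{i+1}$ in $F_i$ and to $v_{j+1}$ in $F_j$, then $w$ would lie in exactly the four faces $F_i$, $F_j$, $D_{v_{i+1}}$, $D_{v_{j+1}}$ --- consistent with $4$-valence, and with no double edge. Meanwhile your deduction that the side faces \emph{are} triangles rests on the inner circuit being a genuine $k$-circuit with distinct vertices, i.e.\ precisely on the exclusion you have only established for triangles; and the raw count $\sum_{G}\mathrm{f}_0(G) = 4\mathrm{f}_0 = 8k$ alone leaves a slack of $k-3$ over the minimum $k + 3\cdot(2k) + 3 = 7k+3$, which for $k\geq 4$ permits, say, a triangular outer face $F^*$ together with several quadrilateral side faces. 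So the equality case, as written, is circular.

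The paper closes this loop with a hyperbolic-geometric input that your purely combinatorial argument omits: conditions $(\mathfrak{m}_2)$ and $(\mathfrak{m}_5)$ of Andreev's theorem forbid two faces adjacent to $F$ along disjoint edges from sharing an edge or a vertex outside $F$, \emph{whatever} their number of sides. With that lemma, the $\mathrm{f}_0(F_i)-2$ non-$F$ vertices of the side faces are pairwise distinct, so $\mathrm{f}_0 \geq k + \sum_i\left(\mathrm{f}_0(F_i)-2\right) \geq 2k$ with equality only if every $F_i$ is a triangle, and then an edge count forces the $D_{v_i}$ and $F^*$ into place as well; the rest of your argument goes through. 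You should either import this consequence of Andreev's conditions or supply a combinatorial replacement that covers non-triangular side faces.
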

\begin{proof}
Let $\mathscr{P} \subset \mathbb{H}^3$ be an ideal right-angled polyhedron. Let $F \in \Omega_2(\mathscr{P})$ be a $k$-gonal face, $k \geq 3$. For each edge $e\in \Omega_1(F)$ there is exactly one further face adjacent to $F$ along $e$. For each vertex $v$, being four-valent by Andreev's theorem \cite[Theorem~2]{Andreev2}, there exists a face intersecting $F$ at $v$ only. Moreover, all the faces mentioned above are different from each other, so that we have $\mathrm{f}_2(\mathscr{P}) \geq 2k+1$. Observe that these faces can not constitute yet a polyhedron. Indeed, consider $F$ as a ``bottom'' face of $\mathscr{P}$. Then the new faces we have added make a surface wrapping around the interior of $\mathscr{P}$ along the edges of $F$. Since all vertices are four-valent, at least one additional ``top'' face is required to close up the polyhedron.  Hence $\mathrm{f}_2(\mathscr{P}) \geq 2k+2$. The antiprism $\mathscr{A}_k$ satisfies 
\begin{equation}\label{eqAntiprismFace}
\mathrm{f}_2(\mathscr{A}_k) = 2k+2
\end{equation}
and so has minimal facet number.

\smallskip
It remains to show that a polyhedron $\mathscr{P}$ with $\mathrm{f}_2(\mathscr{P}) = \mathrm{f}_2(\mathscr{A}_k)$ is in fact isometric to $\mathscr{A}_k$. Since $\mathscr{P}$ has four-valent vertices, $2 \mathrm{f}_1(\mathscr{P}) = 4 \mathrm{f}_0(\mathscr{P})$. From this equality and Euler's identity $\mathrm{f}_0(\mathscr{P})-\mathrm{f}_1(\mathscr{P})+\mathrm{f}_2(\mathscr{P}) = 2$ we obtain that
\begin{equation}\label{eqAntiprismf0f2}
\mathrm{f}_2(\mathscr{P}) = \mathrm{f}_0(\mathscr{P}) + 2.
\end{equation}

\smallskip
Consider the faces adjacent to the $k$-gon $F$ along its edges. We shall prove that no pair of them can have a common vertex $v \notin \Omega_0(\mathscr{P})$. By supposing the contrary, let us denote two such faces $F_i$, $i=1,2$, and let them intersect at $v$. Observe that $F_i$, $i=1,2$, are adjacent to $F$ along two disjoint edges $e_1$ and $e_2$. In fact, if $e_1$ intersects $e_2$ in a vertex $u\in\Omega_0(F)$, then since $\mathscr{P}$ has convex faces we obtain two geodesic segments joining $v$ to $u$. One of them belongs to $F_1$ and the other belongs to $F_2$. This is impossible, unless the considered segments are represented by a common edge $e$ of $F_i$, $i=1,2$, adjacent to both $v$ and $u$. But then the vertex $u$ has only three adjacent edges: $e_1$, $e_2$ and $e$. This is a contradiction to $u$ having valency four. Now if $F_1$ and $F_2$ share an edge $e$ such that $v \in \Omega_0(\mathscr{P})$, then condition ($\mathfrak{m}_2$) of Andreev's theorem \cite[Theorem 2]{Andreev2} does not hold as depicted in Fig.~\ref{antiprism3circuit}. If $F_1$ and $F_2$ share only the vertex~$v$, then condition ($\mathfrak{m}_5$) of \cite[Theorem 2]{Andreev2} is not satisfied as depicted in Fig.~\ref{antiprism2circuit}.

\smallskip
Suppose that a face $F^{\prime}$ adjacent to the $k$-gon $F \in \Omega_2(\mathscr{P})$ along an edge is not triangular. Then $F^{\prime}$ has $\mathrm{f}_0(F^{\prime})$ vertices, and two among them are already counted in $\mathrm{f}_0(F)$. Hence we have at least 
\begin{equation*}
\sum_{\parbox[h]{2.2cm}{\scriptsize{$F^{\prime}$ adjacent to\\ $F$ along an edge}}} \left(\mathrm{f}_0(F^{\prime}) - 2\right) \geq (k-1)+2 = k+1
\end{equation*}
additional vertices, since $\mathrm{f}_0(F^{\prime}) \geq 3$ for each $F^{\prime}$ among $k$ faces adjacent to $F$ and at least one such face $F^{\prime}$ has $\mathrm{f}_0(F^{\prime}) \geq 4$. Thus $\mathrm{f}_0(\mathscr{P}) \geq 2k+1$, and by (\ref{eqAntiprismf0f2}) the estimate $\mathrm{f}_2(\mathscr{P}) \geq 2k+3$ follows. Equality (\ref{eqAntiprismFace}) implies $\mathrm{f}_2(\mathscr{P}) > \mathrm{f}_2(\mathscr{A}_k)$ and we arrive at a contradiction. 
Hence all the faces adjacent to $F$ along its edges are triangular.

\smallskip
Consider the faces of $\mathscr{P}$ adjacent to the $k$-gon $F \in \Omega_2(\mathscr{P})$ only at its vertices. Suppose that one of them, say $F^{\prime}$, is not triangular. Then we have 
\begin{equation*}
\sum_{\parbox[h]{2.2cm}{\scriptsize{$F^{\prime}$ adjacent to\\ $F$ at a vertex}}} \mathrm{f}_1(F^{\prime}) \geq 3(k-1)+4 = 3k+1
\end{equation*}
additional edges. But then $\mathrm{f}_1(\mathscr{P}) \geq 4k+1$ and we arrive at a contradiction. Indeed, in this case $\mathrm{f}_1(\mathscr{P}) > \mathrm{f}_1(\mathscr{A}_k) = 4k$, while a polyhedron of minimal facet number has all $\mathrm{f}_i$ minimal, $i=0,1,2$, as follows from equation (\ref{eqAntiprismf0f2}) and 
the remarks before.

\smallskip
Hence we have a $k$-gonal face $F$, $k\geq 3$, together with $2k$ triangular side faces adjacent to it along the edges and at the vertices. By adding another one $k$-gonal face we close up the polyhedron $\mathscr{P}$, while its vertex number remains unchanged. Observe that there is no other way to finish this construction without increasing the vertex number.

\smallskip
Thus, an ideal right-angled polyhedron $\mathscr{P} \subset \mathbb{H}^3$ having minimal face number, which contains at least one $k$-gon, is combinatorially isomorphic to $\mathscr{A}_k$. By \cite[Theorem~3]{Andreev1} and \cite[Theorem~2]{Andreev2}, the polyhedron $\mathscr{P}$ is isometric to $\mathscr{A}_k$.
\end{proof}

\medskip
\textsf{\textbf{Note (to Proposition~\ref{proposition_flower})}} The classification of \textit{polygonal maps} on the two-dimensional sphere given in \cite{Deza2} provides another argument to show the uniqueness of antiprism stated above. Namely, \cite[Theorem~1]{Deza2} says that $\mathscr{P}$ has in fact not less than two $k$-gonal faces. Hence $\mathrm{f}_2(\mathscr{P}) = 2k+2$ if and only if $\mathscr{P}$ has exactly two $k$-gonal faces and $2k$ triangular faces. Polygonal maps of this kind are classified by \cite[Theorem~2]{Deza2}. Among them only the map isomorphic to the one-skeleton of $\mathscr{A}_k$ satisfies Steiniz's theorem \cite[Chapter~4]{Ziegler}. Thus, the polyhedron $\mathscr{P}$ is combinatorially isomorphic to $\mathscr{A}_k$.\footnote{the author is grateful to Michel Deza for indicating the very recent paper \cite{Deza2}.}

\medskip
\textbf{3.2 Combinatorial constraints on facet adjacency.} Let $F_1$, $\dots$, $F_m$ be an ordered sequence of facets of a given hyperbolic polytope $\mathscr{P} \subset \mathbb{H}^4$ such that each facet is adjacent only to the previous and the following ones either through a co-dimension two face or through an ideal vertex, while the last facet $F_m$ is adjacent only to the first facet $F_1$ (through a co-dimension two face or through an ideal vertex, as before) and no three of them share a lower-dimensional face. Call the sequence $F_1$, $\dots$, $F_m$ \textit{a $(k,\ell)$ circuit}, $k+\ell = m$, if it comprises $k$ co-dimension two faces and $\ell$ ideal vertices shared by the facets. We complete the analysis carried out in \cite{VinbergPotyagailo} in the following way.

\begin{lemma}[Adjacency constraints]\label{lemma_circuit}
Let $\mathscr{P} \subset \mathbb{H}^4$ be an ideal right-angled polytope. Then $\mathscr{P}$ contains no $(3,0)$, $(4,0)$ and $(2,1)$ circuits.
\end{lemma}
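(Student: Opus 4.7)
The approach in every case is to translate the circuit hypothesis into a statement about the Gram matrix of the outward unit normals $n_i$ in the Minkowski model $\mathbb{R}^{4,1}$, using the following dictionary valid for a finite-volume ideal right-angled Coxeter polytope: the entry $\langle n_i,n_j\rangle$ equals $0$ when $F_i$ and $F_j$ share a codimension-two face, equals $-1$ when they share only an ideal vertex (since at an ideal vertex the cube vertex figure forces two facets meeting only there to be opposite parallel faces of the cube, hence to have normals with Minkowski product $-1$), and is strictly less than $-1$ when they share nothing. I will then derive a contradiction in each case either by descending to a facet of $\mathscr{P}$ and applying the same dictionary inside an ideal right-angled polyhedron in $\mathbb{H}^3$, or by a signature obstruction in $\mathbb{R}^{4,1}$.

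For the $(3,0)$ circuit with $F_1,F_2,F_3$ pairwise sharing codimension-two faces, I descend to $F_1$. Because $\langle n_1,n_2\rangle=\langle n_1,n_3\rangle=0$, the vectors $n_2,n_3$ are tangent to $H_1$ and serve as normals of the two facets $F_{12}=F_1\cap F_2$ and $F_{13}=F_1\cap F_3$ of the ideal right-angled polyhedron $F_1\subset H_1\cong\mathbb{H}^3$. Their mutual Gram entry is $\langle n_2,n_3\rangle=0$, which under the $\mathbb{H}^3$ dictionary forces $F_{12}$ and $F_{13}$ to share an edge of $F_1$. Such an edge would lie in $F_1\cap F_2\cap F_3$, contradicting the assumption that no three facets of the circuit share a face.

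For the $(2,1)$ circuit I carry out the same reduction inside $F_2$: the two facets $F_{12}$ and $F_{23}$ of $F_2$ now have mutual Gram entry $\langle n_1,n_3\rangle=-1$, forcing them to share an ideal vertex of $F_2$, which is then a common ideal vertex of $F_1,F_2,F_3$ and contradicts the circuit. For the $(4,0)$ circuit, the reduction to $F_2$ first rules out $\langle n_1,n_3\rangle=-1$, since this would place an ideal vertex in $F_1\cap F_3$ contrary to the circuit definition; analogously $\langle n_2,n_4\rangle\neq-1$. Hence $\langle n_1,n_3\rangle,\langle n_2,n_4\rangle<-1$. Reordering the four facets as $(F_1,F_3,F_2,F_4)$ makes the $4\times 4$ Gram submatrix block-diagonal with two blocks of the form $\bigl(\begin{smallmatrix}1 & t\\ t & 1\end{smallmatrix}\bigr)$, $t<-1$; each such block has eigenvalues $1\pm t$ and hence signature $(1,1)$, so the full submatrix has signature $(2,2)$, incompatible with the signature $(4,1)$ of the ambient Minkowski form, which admits at most one negative eigenvalue in any principal submatrix.

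The step I expect to be most delicate is the dictionary translating combinatorial adjacency into the exact Gram entry. In particular, it requires the cube vertex figure at ideal vertices of $\mathscr{P}$, together with the finite-volume hypothesis, to conclude that the entry $-1$ actually reflects a shared ideal vertex of $\mathscr{P}$ rather than an accidental parallelism at some ideal point not belonging to the vertex set; and it requires the Coxeter structure of $\mathscr{P}$ to conclude that the entry $0$ actually reflects a shared codimension-two face rather than two hyperplanes that happen to be orthogonal but bound no common face. Once this dictionary is secured, the case analysis is short: the $(3,0)$ and $(2,1)$ obstructions come from a three-dimensional Vinberg--Andreev argument inside a single facet, while the $(4,0)$ obstruction reduces to a one-line signature count in $\mathbb{R}^{4,1}$.
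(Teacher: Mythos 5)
Your proposal is correct in outline and, for the $(4,0)$ case, essentially coincides with the paper's argument: the paper writes down the same $4\times 4$ Gram matrix, with off-block entries $-\cosh\rho_{13}$ and $-\cosh\rho_{24}$ for the two non-adjacent pairs, and derives the same contradiction with the Lorentzian signature from the two negative eigenvalues $1-\cosh\rho_{13}$ and $1-\cosh\rho_{24}$. Where you genuinely diverge is in the $(3,0)$ and $(2,1)$ cases: the paper disposes of these by citing Proposition~4.1 of Potyagailo--Vinberg, whereas you supply a self-contained proof by restricting to a facet and re-applying the Gram dictionary inside an ideal right-angled polyhedron in $\mathbb{H}^3$. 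That argument is sound, and the ``delicate step'' you isolate is exactly the right one: both directions of your dictionary rest on the standard lemmas on acute-angled polytopes of finite volume (two bounding hyperplanes meeting in $\mathbb{H}^n$ meet along a codimension-two face of the polytope, and two meeting only at infinity share an ideal vertex of it), which are available in Vinberg's survey and are the same facts underlying the cited proposition. Note that the paper's $(4,0)$ computation also tacitly invokes this dictionary when it asserts that the supporting hyperplanes of the non-adjacent pairs are disjoint with a common perpendicular of positive length; your explicit exclusion of the tangent case $\langle n_1,n_3\rangle=-1$ is, if anything, slightly more careful. In short, your route buys independence from the external reference at the cost of having to justify the adjacency--Gram dictionary in both directions, while the paper's buys brevity by outsourcing two of the three cases.
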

\begin{proof}
By \cite[Proposition~4.1]{VinbergPotyagailo} there are no $(3,0)$ and $(2,1)$ circuits. Suppose on the contrary that there exists a $(4,0)$ circuit formed by the facets $F_k\in \Omega_3(\mathscr{P})$, $k=1,2,3,4$. Let $e_k$, $k=1,2,3,4$, denote the outer unit vector normal to the support hyperplane of $F_k$. Consider the Gram matrix of these vectors w.r.t. the Lorentzian form $\langle\cdot, \cdot\rangle_{4,1}$:
\begin{equation*}
G = \left( \langle e_i, e_j \rangle \right)^{4}_{i, j=1} = 
\left(
\begin{array}{cccc}
1& 0& -\cosh\rho_{13}& 0\\
0& 1& 0& -\cosh\rho_{24}\\
-\cosh\rho_{13}& 0& 1& 0\\
0& -\cosh\rho_{24}& 0& 1
\end{array}
\right),
\end{equation*}
where $\rho_{ij} > 0$ is the length of the common perpendicular between two disjoint support hyperplanes for $F_i$ and $F_j$ respectively.
The eigenvalues of $G$ are $\{1\pm\cosh\rho_{13}, 1\pm\cosh\rho_{24}\}$, that means two of them are strictly negative and two are strictly positive. Thus, we arrive at a contradiction with the signature of a Lorentzian form.
\end{proof}

\medskip
\textbf{3.3 Proof of Theorem~\ref{theorem_min_facet}.} Let $\mathscr{P} \subset \mathbb{H}^4$ be an ideal right-angled polytope. Let $P \in \Omega_3(\mathscr{P})$ be a facet. For every two-face $F \in \Omega_2(P)$ there exists a corresponding facet $P^{\prime} \in \Omega_3(\mathscr{P})$, $P^{\prime} \neq P$, such that $P$ and $P^{\prime}$ share the face $F$. Since each vertex figure of $\mathscr{P}$ is a cube, there exists a respective facet $P^{\prime\prime} \in \Omega_3(\mathscr{P})$ for every vertex $v \in \Omega_0(P)$. The vertex figure is depicted in Fig.~\ref{vertex}, where the grey bottom face of the cube corresponds to $P$ and the top face corresponds to $P^{\prime\prime}$. These new facets $P^{\prime}$ and $P^{\prime\prime}$ together with $P$ are pairwise different. In order to show this we use the following convexity argument.

\medskip
(\textit{Convexity argument})
First, observe that no facet of a convex polytope can meet another one at two different two-faces. Now suppose that $P^{\prime}\in\Omega_3(\mathscr{P})$ is a facet adjacent to $P$ at a face $F \in \Omega_2(P)$ and a single vertex $v \in \Omega_0(P)$ not in $F$. The facets $P$ and $P^{\prime}$ have non-intersecting interiors, but the geodesic going through a given point of $F$ to $v$ belongs to both of them by the convexity of $\mathscr{P}$. So we arrive at a contradiction. 

\smallskip
The same contradiction arises if we suppose that there is a facet $P^{\prime}\in\Omega_3(\mathscr{P})$ adjacent to $P$ at two distinct vertices $v, v^{\prime} \in \Omega_0(P)$. In this case we consider the geodesic in $P$ going through $v$ to $v^{\prime}$.

\medskip
By the convexity argument above, the facet number of $\mathscr{P}$ has the lower bound
\begin{equation*}
\mathrm{f}_3(\mathscr{P}) \geq \mathrm{f}_2(P) + \mathrm{f}_0(P) + 1,
\end{equation*}
or, by means of equality (\ref{eqAntiprismf0f2}),
\begin{equation}\label{eq25}
\mathrm{f}_3(\mathscr{P}) \geq 2\, \mathrm{f}_2(P) - 1.
\end{equation}

\smallskip
Observe that the hyperbolic $24$-cell $\mathscr{C}$ has only triangle two-faces. Suppose that $\mathscr{P}$ has at least one $k$-gonal face $F \in \Omega_2(\mathscr{P})$ with $k\geq 4$. We shall show that the estimate $\mathrm{f}_3(\mathscr{P}) \geq 25$ holds, by considering several cases as follows. 

\medskip
\textbf{A)} Suppose that $\mathscr{P}$ has a $k$-gonal two-dimensional face with $k \geq 6$. Then, by~\eqref{eq25} and Proposition~\ref{proposition_flower}, we have 
\begin{equation*}
\mathrm{f}_3(\mathscr{P}) \geq 2\, \mathrm{f}_2(\mathscr{A}_k) - 1 = 2 (2k+2) - 1 \geq 27.
\end{equation*}
Thus $\mathscr{P}$ is not of minimal facet number.

\medskip
\textbf{B)} Suppose that $\mathscr{P}$ has a pentagonal two-dimensional face $F$ contained in a facet $P \in \Omega_3(\mathscr{P})$. Suppose $P$ is not isometric to $\mathscr{A}_5$. This assumption implies $\mathrm{f}_2(P) > 12$. Then \eqref{eq25} grants $\mathrm{f}_3(\mathscr{P}) \geq 25$. 

\medskip
\textbf{C)} Suppose that all the facets of $\mathscr{P}$ containing a pentagonal two-face are isometric to $\mathscr{A}_5$. Let $P_0$ be one of them. Then it  has two neighbouring facets $P_k$, $k=1,2$ both isometric to $\mathscr{A}_5$. Now we count the facets adjacent to $P_k$, $k=0,1,2$ in Fig.~\ref{antiprism5proof}, where $P_0$ is coloured grey. Observe that two-faces in Fig.~\ref{antiprism5proof} sharing an edge are marked with the same number and belong to a common facet, since $\mathscr{P}$ is simple at edges. However, the two-faces marked with different numbers, correspond to different adjacent facets. Suppose on the contrary that there are two faces $F \in \Omega_2(P_i)$, $F^{\prime} \in \Omega_2(P_j)$, $i,j \in \{0,1,2\}$, marked with distinct numbers and a facet $P^{\prime} \in \Omega_3(\mathscr{P})$ such that $P^{\prime}$ is adjacent to $P_i$ at $F$ and to $P_j$ at $F^{\prime}$ and consider the following cases.

\medskip
\textbf{C.1)} If $i=j$, we arrive at a contradiction by the convexity argument above.

\medskip
\textbf{C.2)} If $i=0$, $j\in\{1,2\}$, then there exists a unique geodesic joining a point $p$ of $F$ to a point $p^{\prime}$ of $F^{\prime}$. Observe in Fig.~\ref{antiprism5proof}, that the point $p^{\prime}$ may be chosen so that $p^{\prime} \in F^{\prime} \cap P_0$. Then the geodesic between $p$ and $p^{\prime}$ intersects both the interior of $P^{\prime}$ and the interior of $P_0$. Again, we use the convexity argument and arrive at a contradiction.

\medskip
\textbf{C.3)} Let $i=1$, $j=2$. Then if there exist a face $\tilde{F} \in \Omega_2(P_0)$, $\tilde{F} \cap F \neq \emptyset$, and a face $\tilde{F^{\prime}} \in \Omega_2(P_0)$, $\tilde{F^{\prime}} \cap F^{\prime} \neq \emptyset$, we reduce our argument to case \textbf{C.1} by considering a geodesic segment joining a point of $\tilde{F} \cap F$ to a point of $\tilde{F^{\prime}} \cap F^{\prime}$.

\medskip
The only case when no such two faces $\tilde{F}$ and $\tilde{F^{\prime}}$ exist is if $F$ has number $21$ and $F^{\prime}$ has number $22$ in Fig.~\ref{antiprism5proof}. Then the $(4,0)$-circuit $P_0P_1P^{\prime}P_2$ appears, in contrary to Lemma~\ref{lemma_circuit}.

\smallskip
Thus, one has $22$ new facets adjacent to $P_k$, $k=0,1,2$. Together with $P_k$ themselves, $k=0,1,2$, they provide $\mathrm{f}_3(\mathscr{P}) \geq 25$.

\medskip
\textbf{D)} By now, cases \textbf{A}, \textbf{B} and \textbf{C} imply that if an ideal right-angled hyperbolic polytope $\mathscr{P} \subset \mathbb{H}^4$ has at least one $k$-gonal face with $k\geq 5$, then $\mathrm{f}_3(\mathscr{P}) \geq 25$. Suppose that $\Omega_2(\mathscr{P})$ contains only triangles and quadrilaterals. 

\smallskip
By Andreev's theorem \cite{Andreev2}, each facet $P \in \Omega_3(\mathscr{P})$ has only four-valent vertices. By assumption, $P$ has only triangular and quadrilateral faces. Combinatorial polyhedra of this type are introduced in \cite{Deza1} as \textit{octahedrites} and the list of those possessing up to 17 vertices is given. Note that in view of (\ref{eq25}) we may consider octahedrites that have not more than twelve faces or, by equality (\ref{eqAntiprismf0f2}) from Proposition~\ref{proposition_flower}, ten vertices. In Fig.~\ref{octahedrite89}, \ref{octahedrite10} we depict only those realisable as ideal right-angled hyperbolic polyhedra with eight, nine and ten vertices. The ideal right-angled octahedron has six vertices and completes the list. By considering each of the polyhedra in Fig.~\ref{octahedrite89} and Fig.~\ref{octahedrite10} as a possible facet $P \in \Omega_3(\mathscr{P})$, we shall derive the estimate $\mathrm{f}_3(\mathscr{P}) \geq 25$.

\medskip
\textbf{D.1)} Let $P_0 \in \Omega_3(\mathscr{P})$ be the hyperbolic octahedrite with ten vertices depicted in Fig.~\ref{octahedrite10proof}. Consider the facets of $\mathscr{P}$ adjacent to $P_0$ at its faces. One has $\mathrm{f}_2(P_0) = 12$, and hence $\mathrm{f}_3(\mathscr{P}) \geq 12$. Consider the faces coloured grey in Fig.~\ref{octahedrite10proof}: the front face is called $F_1$ and the back face, called $F_2$, is indicated by the grey arrow.

\smallskip
The facets $P_1, P_2 \in \Omega_3(\mathscr{P})$ adjacent to $P_0$ at $F_1$ and $F_2$, respectively, contain quadrilaterals among their faces. By Proposition~\ref{proposition_flower}, it follows that $\mathrm{f}_2(P_i) \geq \mathrm{f}_2(\mathscr{A}_4) = 10$, $i=1,2$. We shall count all new facets $P^{\prime}$ brought by face adjacency to $P_i$, $i=1,2$.

\smallskip
Observe that no $P^{\prime}$, which does not share an edge with $P_0$, can be adjacent simultaneously to $P_i$ and $P_j$, $i,j \in \{1,2\}$, at two-faces, since otherwise the $(4,0)$ circuit $P_1 P_0 P_2 P^{\prime}$ appears in contrary to Lemma~\ref{lemma_circuit}. 

\smallskip
Each facet $P^{\prime}$ that shares an edge with $F_k$, $k=1,2$, is already counted as adjacent to $P_0$. The facets $P_1$ and $P_2$ are already counted as well, by the same reason. Then the total number of new facets coming together with $P_1$ and $P_2$ is at least $\sum^2_{i=1}\mathrm{f}_2(P_i) - \sum^2_{i=1}\mathrm{f}_1(F_i) - 2  \geq 2\cdot 10 - 2\cdot 4 - 2 = 10$. This implies the estimate $\mathrm{f}_3(\mathscr{P}) \geq 12 + 10 = 22$.

\smallskip
Consider the facets $P_i$, $i=3,4$, adjacent to $P_0$ only at the corresponding circumscribed grey vertices $v_i$, $i=3,4$, in Fig.~\ref{octahedrite10proof}. Then consider the case if $P^{\prime}$ is adjacent to $P_j$, $j \in \{1,2\}$ at a two-face $F^{\prime} \in \Omega_2(P_j)$. If there exist a face $\tilde{F^{\prime}} \in \Omega_2(P_0)$ such that $F^{\prime} \cap \tilde{F^{\prime}} \neq \emptyset$, then choose a point $p \in F^{\prime} \cap \tilde{F^{\prime}}$ and use the convexity argument again for the geodesic going through $p$ to $v_i$. If $F^{\prime} \cap \tilde{F^{\prime}} = \emptyset$, then the $(2,1)$ circuit $P_0P_1P^{\prime}$ appears in contrary to Lemma~\ref{lemma_circuit}. Adding up two new facets gives $\mathrm{f}_3(\mathscr{P}) \geq 24$. Finally, we count $P_0$ itself and arrive at the estimate $\mathrm{f}_3(\mathscr{P}) \geq 25$.

\medskip
\textbf{D.2)} Let $P_0 \in \Omega_3(\mathscr{P})$ be the hyperbolic octahedrite with nine vertices and eleven faces depicted on the right in Fig.~\ref{octahedrite89}. Consider the facets adjacent to $P_0$ at its two-dimensional faces. By counting them, we have $\mathrm{f}_3(\mathscr{P}) \geq \mathrm{f}_2(P_0) = 11$.

\smallskip
Consider the facet $P_1$ adjacent to the triangle face $F_1$ of $P_0$ coloured grey in the center of Fig.~\ref{octahedrite9proof}. By Proposition~\ref{proposition_flower}, we have $\mathrm{f}_2(P_1) \geq \mathrm{f}_2(\mathscr{A}_3) = 8$. By excluding already counted facets adjacent to $P_0$ like in case \textbf{D.1}, the facet $P_1$ brings new $\mathrm{f}_2(P_1) - \mathrm{f}_1(F_1) - 1 \geq 8 -3 -1 = 4$ ones by face adjacency. Then $\mathrm{f}_3(\mathscr{P}) \geq 15$. The visible part of the facet $P_2$ adjacent to $P_0$ at its back face $F_2$ is coloured grey in Fig.~\ref{octahedrite9proof}. Again, we have $\mathrm{f}_2(P_2) \geq \mathrm{f}_2(\mathscr{A}_3) = 8$. By counting new facets adjacent to $P_2$ at faces, it brings another $\mathrm{f}_2(P_2) - \mathrm{f}_1(F_2) - 1 \geq 8 -3 -1 = 4$ new ones. Hence $\mathrm{f}_3(\mathscr{P}) \geq 19$.

\smallskip
The facets $\widehat{P}_k$, $k=3,4,5$, adjacent to $P_0$ only at the circumscribed hollow vertices $v_k$, $k=3,4,5$, in Fig.~\ref{octahedrite9proof} are different from the already counted ones either by the convexity argument or by Lemma~\ref{lemma_circuit}, which forbids $(2,1)$ circuits, c.f. the argument of case \textbf{D.1}. Thus $\mathrm{f}_3(\mathscr{P}) \geq 22$. 

\smallskip
Let $\widehat{P}_k$, $k=6,7,8$, be the facets of $\mathscr{P}$ adjacent to $P_2$ only at the respective circumscribed grey vertices $v_k$, $k=6,7,8$ in Fig.~\ref{octahedrite9proof}. Let the faces of $P_1$ and $P_2$, that contain a single circumscribed hollow or grey vertex, be $F_k$, $k=3,\dots,8$. Finally, let $P(k)$, $k=6,7,8$, denote the facets adjacent to $P_2$ at $F_k$, $k=6,7,8$, respectively. 

\smallskip
By the convexity argument or by Lemma~\ref{lemma_circuit}, similar to \textbf{D.1}, the facets $\widehat{P}_i$, $i=6,7,8$ can not coincide with the already counted ones, except for $\widehat{P}_j$, $j=3,4,5$ and the facets adjacent only to $P_1$. 

\smallskip
First consider the case when a facet from $\widehat{P}_i$, $i\in\{6,7,8\}$, coincides with $\widehat{P}_j$, $j\in\{3,4,5\}$. Then 

\smallskip
\textbf{1)} either $\widehat{P}_i = \widehat{P}_j$ is such that $(i,j) \neq (7,3)$, $(6,4)$ and $(8,5)$, so the $(2,1)$ circuit $\widehat{P}_jP(i)P_0$ appears;

\smallskip
\textbf{2)} or $\widehat{P}_i = \widehat{P}_j$ has $(i,j) = (7,3), (6,4)$, or $(8,5)$, and contains therefore a part of the geodesic going from $v_i$ to $v_j$ by convexity. Since the edge shared by $F_i$ and $F_j$ belongs to three facets $P_0$, $P_2$ and $P(i)$, then $P(i)$ is adjacent to $P_0$ at $F_j$ and to $P_2$ at $F_i$. Hence $P(i)$ contains the vertices $v_i$, $v_j$ and the geodesic segment between them as well. Since $P(i)$ and $\widehat{P}_i$ have non-intersecting interiors, the two following cases are only possible.

\smallskip
\textbf{2.1)} The geodesic segment $v_iv_j$ belongs to a triangle face of $P(i)$: then $v_iv_j$ is an edge. Observe that the face $F_j$ of $P(i)$ is always a triangle, as in Fig.~\ref{octahedrite9proof}, while the face $F_i$ is either a triangle or a quadrilateral. Then the edges of $F_i$, $F_j$ and the edge $v_iv_j$ constitute a sub-graph in the one-skeleton of $P(i)$. The possible sub-graphs $\tau$ and $\sigma$ depending on the vertex number of $F_i$ are depicted in Fig.~\ref{graphs}. The graph $\tau$ is the one-skeleton of a tetrahedron. The graph $\sigma$ is the one-skeleton of a square pyramid without one vertical edge. By assumption, the facet $P(i)$ is an octahedrite with not more than ten vertices. Such octahedrites are depicted in Fig.~\ref{octahedrite89}-\ref{octahedrite10}, and none of them contains in its one-skeleton a sub-graph combinatorially isomorphic to $\tau$ or~$\sigma$. 

\smallskip
The case when $P(i)$ is an octahedron still remains. Clearly, its one-skeleton does not contain a sub-graph combinatorially isomorphic to $\tau$. However, it contains a sub-graph isomorphic to $\sigma$. The only possible sub-graph embedding of $\sigma$ into the one-skeleton of an octahedron, up to a symmetry, is given in Fig.~\ref{octaembed} on the left. But then the face $F_i$ of $P_2$ correspond to the interior domain $F$ in $P(i)$ coloured grey in Fig.~\ref{octaembed} on the right. Thus, we arrive at a contradiction with the convexity of facets. 

\smallskip
\textbf{2.2)} The geodesic segment $v_iv_j$ belongs to a quadrilateral face of $P(i)$. The general picture of this case is given in Fig.~\ref{quadredge}. Again two sub-graphs $\nu$ and $\omega$ arise, as depicted in Fig.~\ref{graphsedge}. Such sub-graphs appear at most for the octahedrites as given in Fig.~\ref{octahedrite89}-\ref{octahedrite10}. Observe, that none of them contains in its one-skeleton a sub-graph isomorphic to $\nu$. 

\smallskip
All possible embeddings of $\omega$ into the one-skeleton of each considered octahedrite are given, up to a symmetry, in Fig.~\ref{embed1}-\ref{embed8}. Since the edges $e$ and $e^{\prime}$ belong to a single face as in Fig.~\ref{quadredge}, we arrive at a contradiction, since there is no embedding of $\omega$ with this property.  

\smallskip
Finally, consider the case when a facet from $\widehat{P}_i$, $i\in\{6,7,8\}$, coincides with a facet $P^{\prime}$ adjacent only to $P_1$ at a two-face. Then the $(4,0)$ circuit $P_0P_1P^{\prime}P(i)$ arises, in contrary to Lemma~\ref{lemma_circuit}.

\smallskip
So the facets $\widehat{P}_k$, $k=6,7,8$, are different from the already counted ones. Adding them up, we obtain $\mathrm{f}_3(\mathscr{P}) \geq 22 + 3 = 25$.

\medskip 
\textbf{D.3)} Let $P_0 \in \Omega_3(\mathscr{P})$ be the hyperbolic octahedrite with eight vertices depicted on the left in Fig.~\ref{octahedrite89}. Observe that this polyhedron is combinatorially isomorphic to $\mathscr{A}_4$, and hence isometric to it by Andreev's theorem \cite{Andreev2}. Moreover, we suppose that all facets of $\mathscr{P}$ are isometric to $\mathscr{A}_4$, since other possible facet types are already considered in \textbf{D.1} and \textbf{D.2}. 

\smallskip
Consider the facets $P_k$, $k=1,2$, adjacent to the front and the back quadrilateral faces of $P_0$. The facets $P_i$, $i=0,1,2$, are depicted together in Fig.~\ref{octahedrite8proof}, where $P_0$ is coloured grey. We count the facets adjacent to $P_i$, $i=1,2,3$, at faces in Fig.~\ref{octahedrite8proof}. Observe that different numbers on the faces shown in Fig.~\ref{octahedrite8proof} correspond to distinct facets of $\mathscr{P}$ adjacent to them. The counting arguments are completely analogous to those of \textbf{C}. Hence, we obtain the estimate $\mathrm{f}_3(\mathscr{P}) \geq 18$. By taking into account the facets $P_i$, $i=1,2,3$, themselves, it becomes $\mathrm{f}_3(\mathscr{P}) \geq 21$. 

\smallskip
Consider the facets $\widehat{P}_i$, $i=1,2,3,4$, adjacent to $P_2$ only at its circumscribed vertices $v_i$, $i=1,2,3,4$ in Fig.~\ref{octahedrite8proof}. By analogy with the proof in \textbf{D.2}, the $\widehat{P}_i$'s are different from the already counted ones. Thus, we add four new facets and obtain $\mathrm{f}_3(\mathscr{P}) \geq 25$.

\smallskip
Hence, a polytope $\mathscr{P}$ with $\mathrm{f}_3(\mathscr{P}) = 24$ has only octahedral facets and, by the argument from Theorem~\ref{theorem_min_volume}, is isometric to the hyperbolic $24$-cell.~$\square$

\section{A dimension bound for ideal right-angled hyperbolic polytopes}

Given a combinatorial $n$-dimensional polytope $\mathscr{P}$, define the average number of $\ell$-dimensional faces over $k$-dimensional ones as
\begin{equation*}
\mathrm{f}^{\ell}_k(\mathscr{P}) = \frac{1}{\mathrm{f}_k(\mathscr{P})} \sum_{P \in \Omega_k(\mathscr{P})}\mathrm{f}_{\ell}(P).
\end{equation*}
The Nikulin-Khovanski\u{\i} inequality \cite{Khovanskii, Nikulin} applied to the polytope $\mathscr{P}$ which is simple at edges, states that
\begin{equation}\label{NKh}
\mathrm{f}^{\ell}_k(\mathscr{P}) < \binom{n-\ell}{n-k} \frac{\binom{\left\lfloor\frac{n}{2}\right\rfloor}{\ell} + \binom{\left\lfloor\frac{n+1}{2}\right\rfloor}{\ell}}{\binom{\left\lfloor\frac{n}{2}\right\rfloor}{k} + \binom{\left\lfloor\frac{n+1}{2}\right\rfloor}{k}},
\end{equation}
where $\lfloor\circ\rfloor$ means the floor function. 

\medskip
\textsf{\textbf{Corollary (of Theorem~\ref{theorem_min_facet})}} \textit{There are no ideal right-angled hyperbolic polytopes in $\mathbb{H}^n$, if $n \geq 7$.}

\medskip
\begin{proof}
Suppose that $\mathscr{P} \subset \mathbb{H}^n$ is an ideal right-angled hyperbolic polytope, $n\geq 4$. Since we have $\mathrm{f}^3_4(\mathscr{P}) \geq 24$ by Theorem~\ref{theorem_min_facet}, then (\ref{NKh}) implies $n \leq 5$ for $n$ odd and $n \leq 6$ for $n$ even.
\end{proof}

\bigskip
{\it
Alexander Kolpakov

Department of Mathematics

University of Fribourg

chemin du Mus\'ee 23

CH-1700 Fribourg, Switzerland

aleksandr.kolpakov(at)unifr.ch}

\newpage

\begin{figure}
\begin{center}
\includegraphics* [totalheight=4cm]{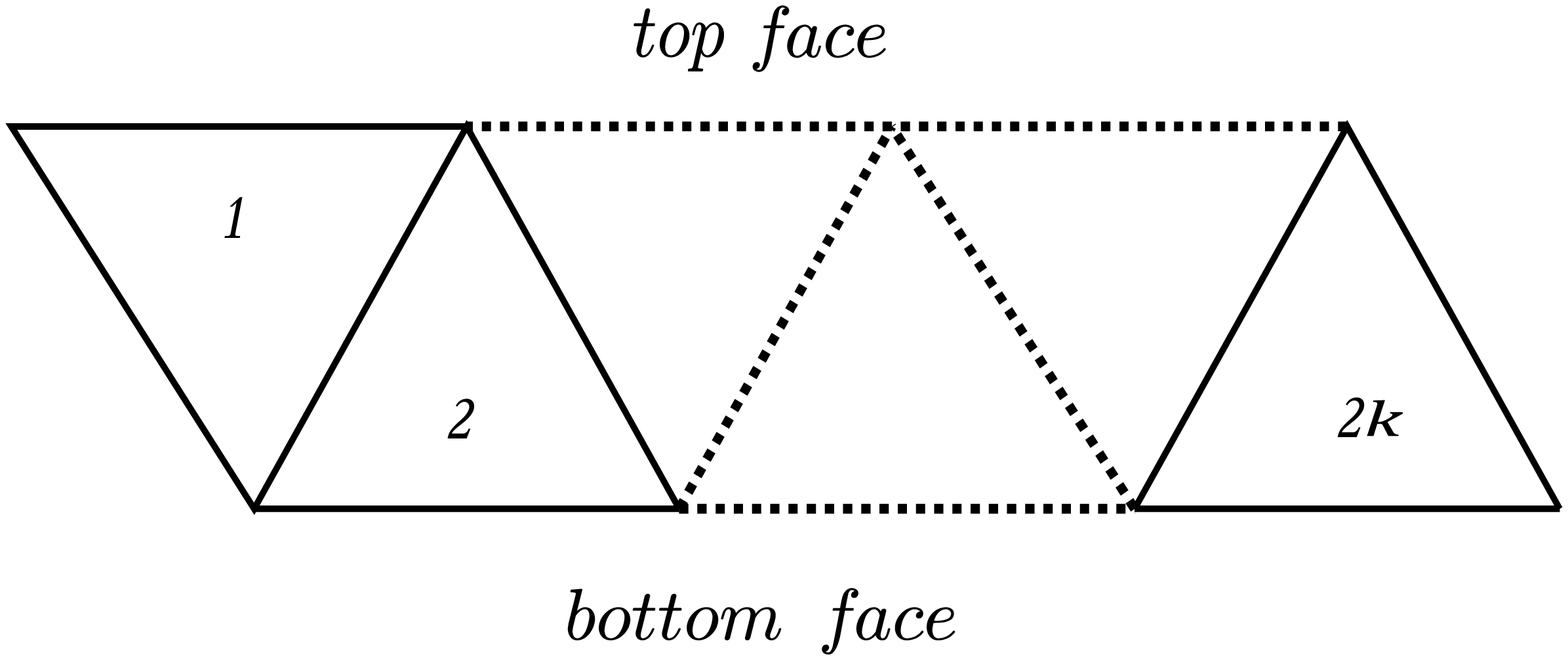}
\end{center}
\caption{Antiprism $\mathscr{A}_k$, $k \geq 3$.} \label{antiprism_k}
\end{figure}

\begin{figure}
\begin{center}
\includegraphics* [totalheight=6cm]{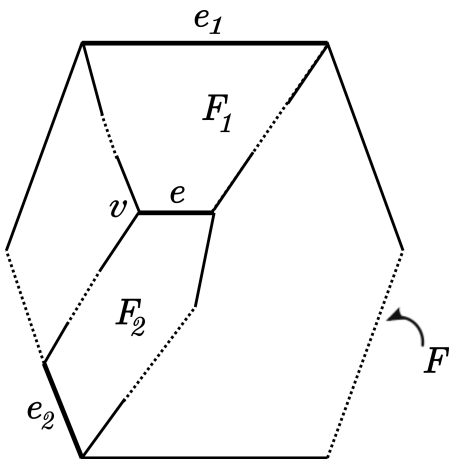}
\end{center}
\caption{Three-circuit deprecated by Andreev's theorem consists of the faces $F$, $F_1$ and $F_2$} \label{antiprism3circuit}
\end{figure}

\begin{figure}
\begin{center}
\includegraphics* [totalheight=6cm]{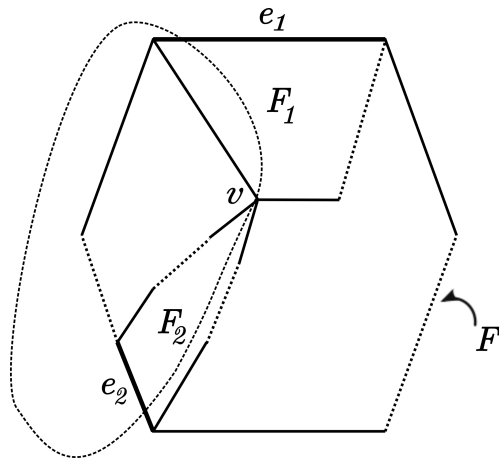}
\end{center}
\caption{The circuit deprecated by Andreev's theorem is indicated by the dashed line} \label{antiprism2circuit}
\end{figure}

\newpage

\begin{figure}
\begin{center}
\includegraphics* [totalheight=6cm]{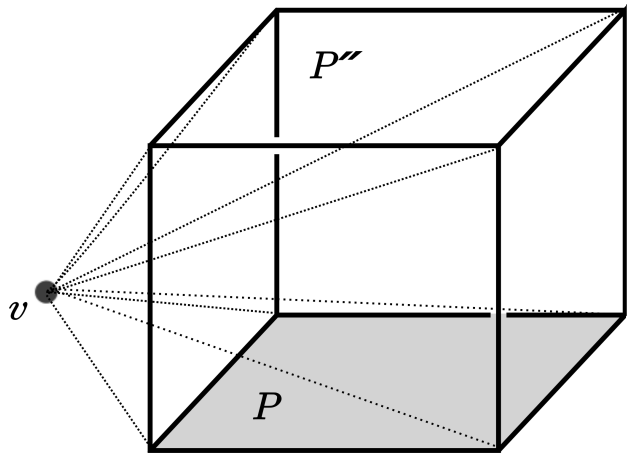}
\end{center}
\caption{The vertex figure $\mathscr{P}_v$} \label{vertex}
\end{figure}

\begin{figure}
\begin{center}
\includegraphics* [totalheight=8cm]{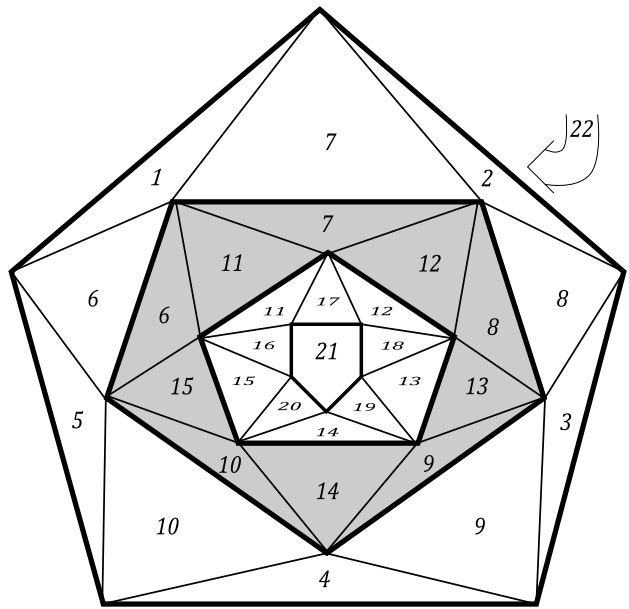}
\end{center}
\caption{Three facets of $\mathscr{P}$ isometric to $\mathscr{A}_5$ and their neighbours} \label{antiprism5proof}
\end{figure}

\begin{figure}
\begin{center}
\includegraphics* [totalheight=5cm]{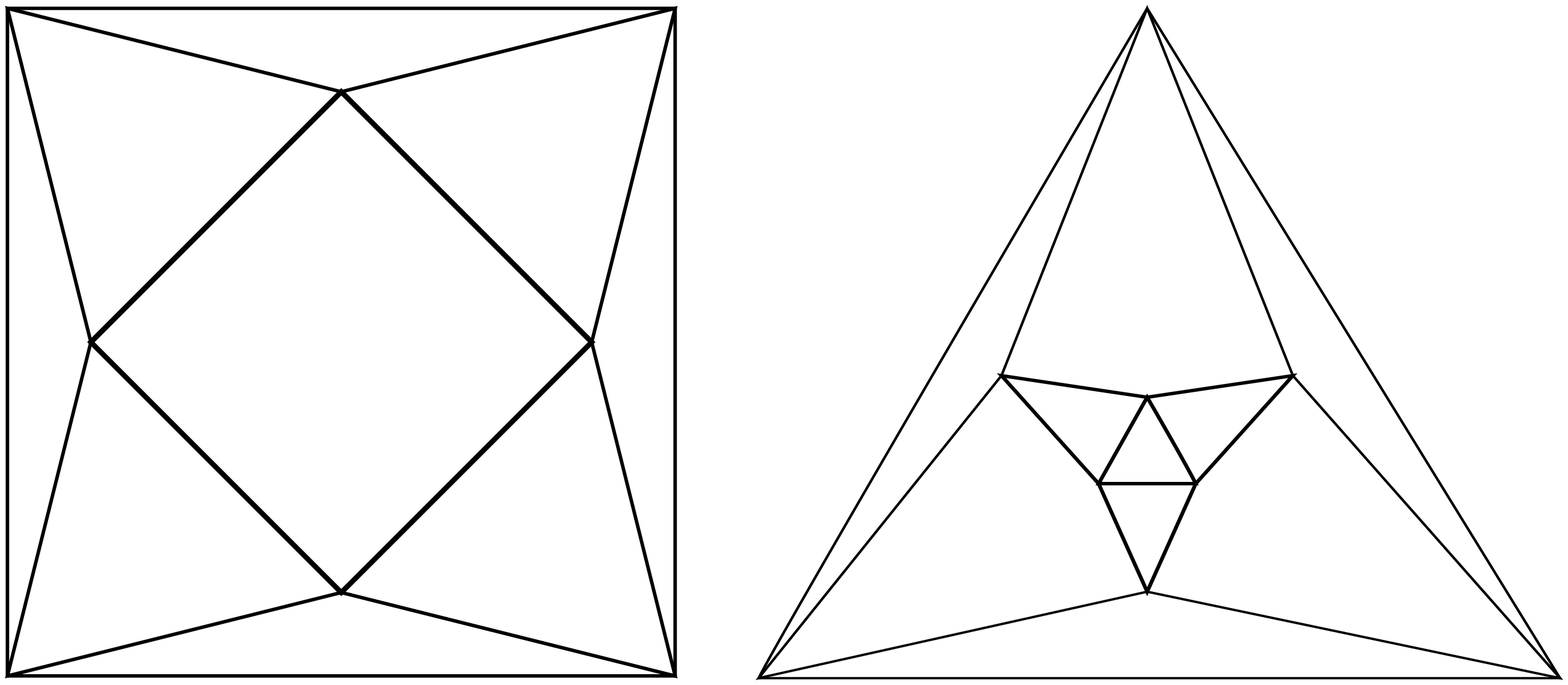}
\end{center}
\caption{Hyperbolic octahedrites with 8 (left) and 9 (right) vertices} \label{octahedrite89}
\end{figure}

\begin{figure}
\begin{center}
\includegraphics* [totalheight=5cm]{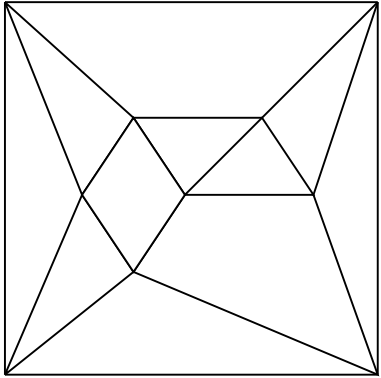}
\end{center}
\caption{Hyperbolic octahedrite with 10 vertices} \label{octahedrite10}
\end{figure}

\newpage

\begin{figure}
\begin{center}
\includegraphics* [totalheight=8cm]{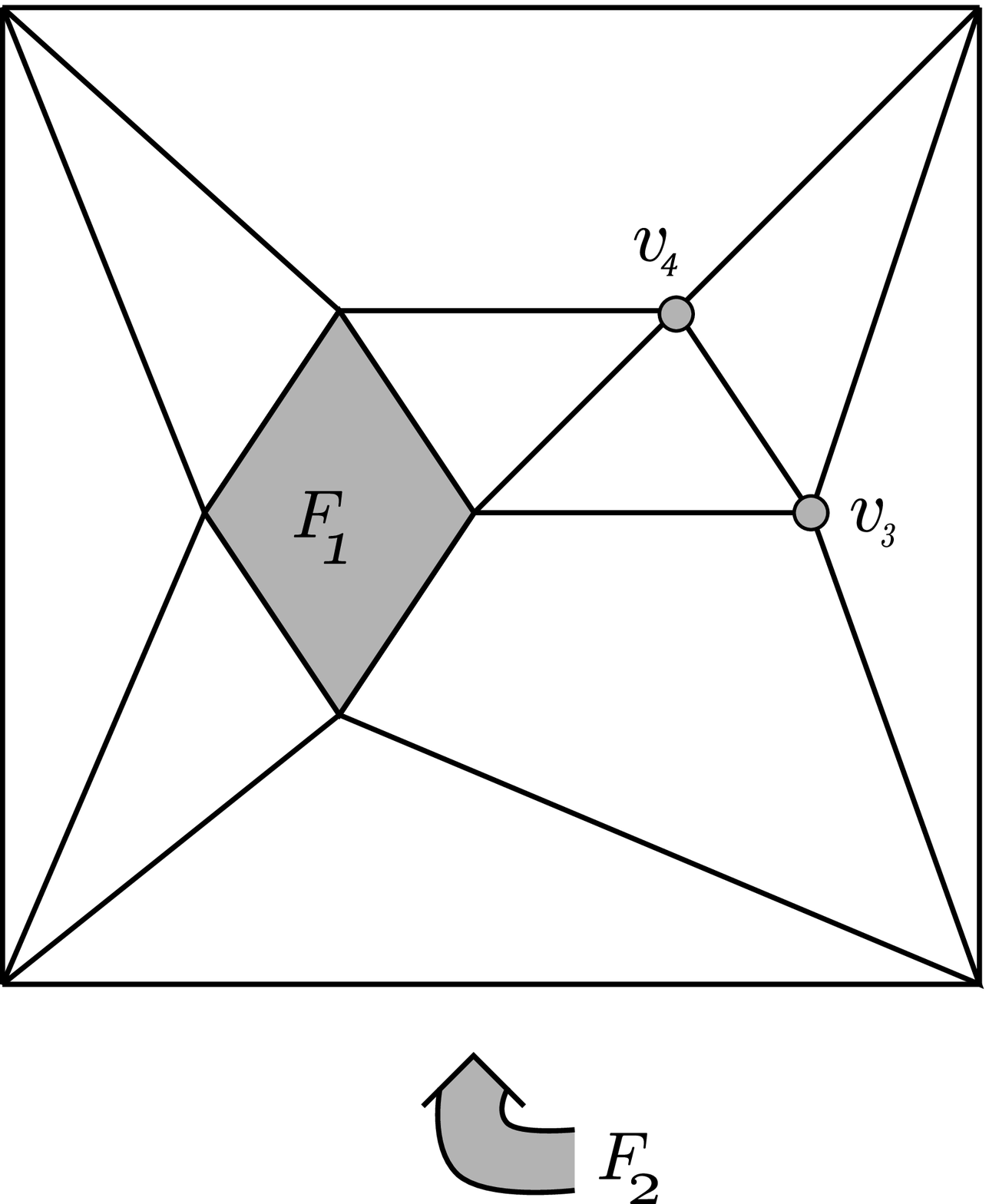}
\end{center}
\caption{Hyperbolic octahedrite with 10 vertices as a facet of $\mathscr{P}$ and its neighbours} \label{octahedrite10proof}
\end{figure}

\begin{figure}
\begin{center}
\includegraphics* [totalheight=8cm]{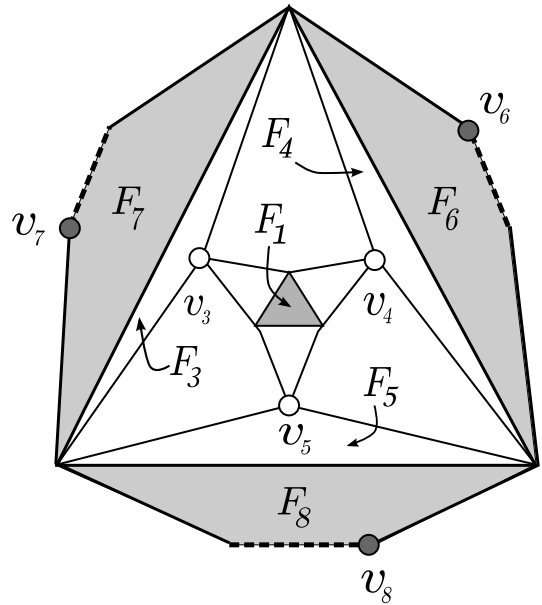}
\end{center}
\caption{Hyperbolic octahedrite with 9 vertices as a facet of $\mathscr{P}$ and its neighbours (omitted edges are dotted)} \label{octahedrite9proof}
\end{figure}

\begin{figure}
\begin{center}
\includegraphics* [totalheight=5cm]{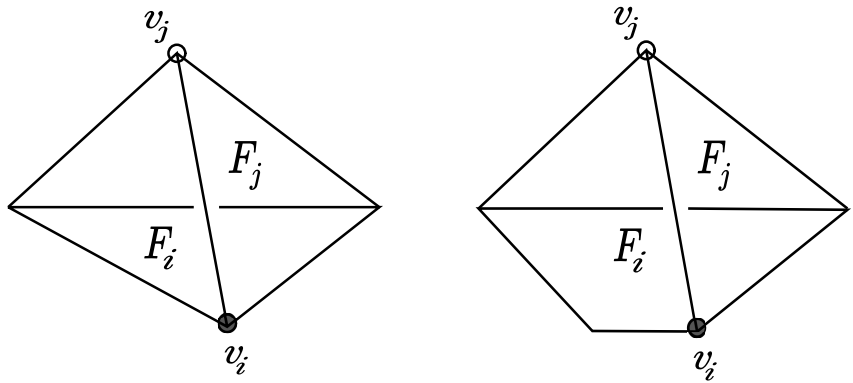}
\end{center}
\caption{Sub-graphs $\tau$ (on the left) and $\sigma$ (on the right)} \label{graphs}
\end{figure}

\begin{figure}
\begin{center}
\includegraphics* [totalheight=5cm]{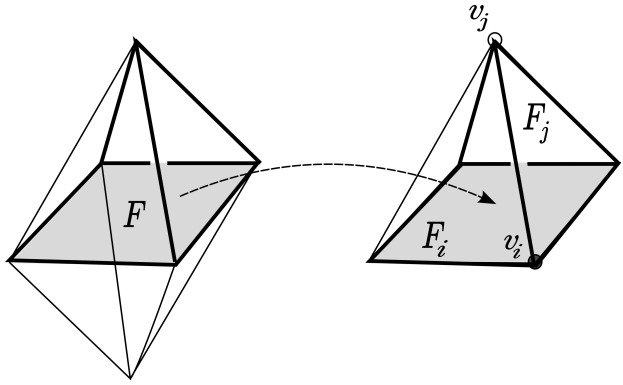}
\end{center}
\caption{Sub-graph $\sigma$ in an octahedron (on the left) and in the facet $P(i)$ (on the right)} \label{octaembed}
\end{figure}

\newpage

\begin{figure}
\begin{center}
\includegraphics* [totalheight=5cm]{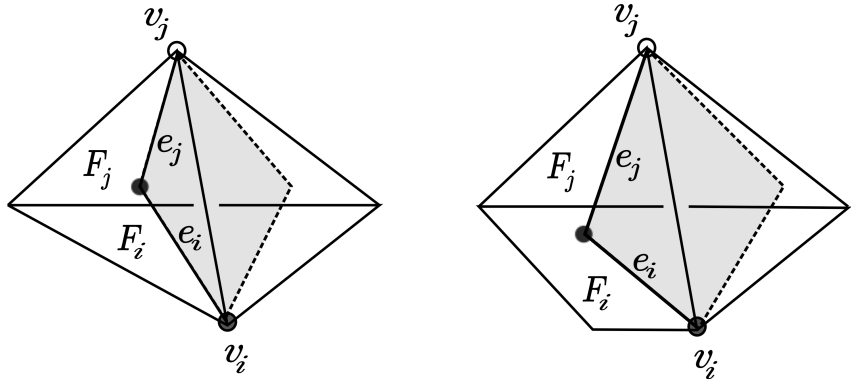}
\end{center}
\caption{The segment $v_iv_j$ belongs to a quadrilateral face} \label{quadredge}
\end{figure}

\begin{figure}
\begin{center}
\includegraphics* [totalheight=5cm]{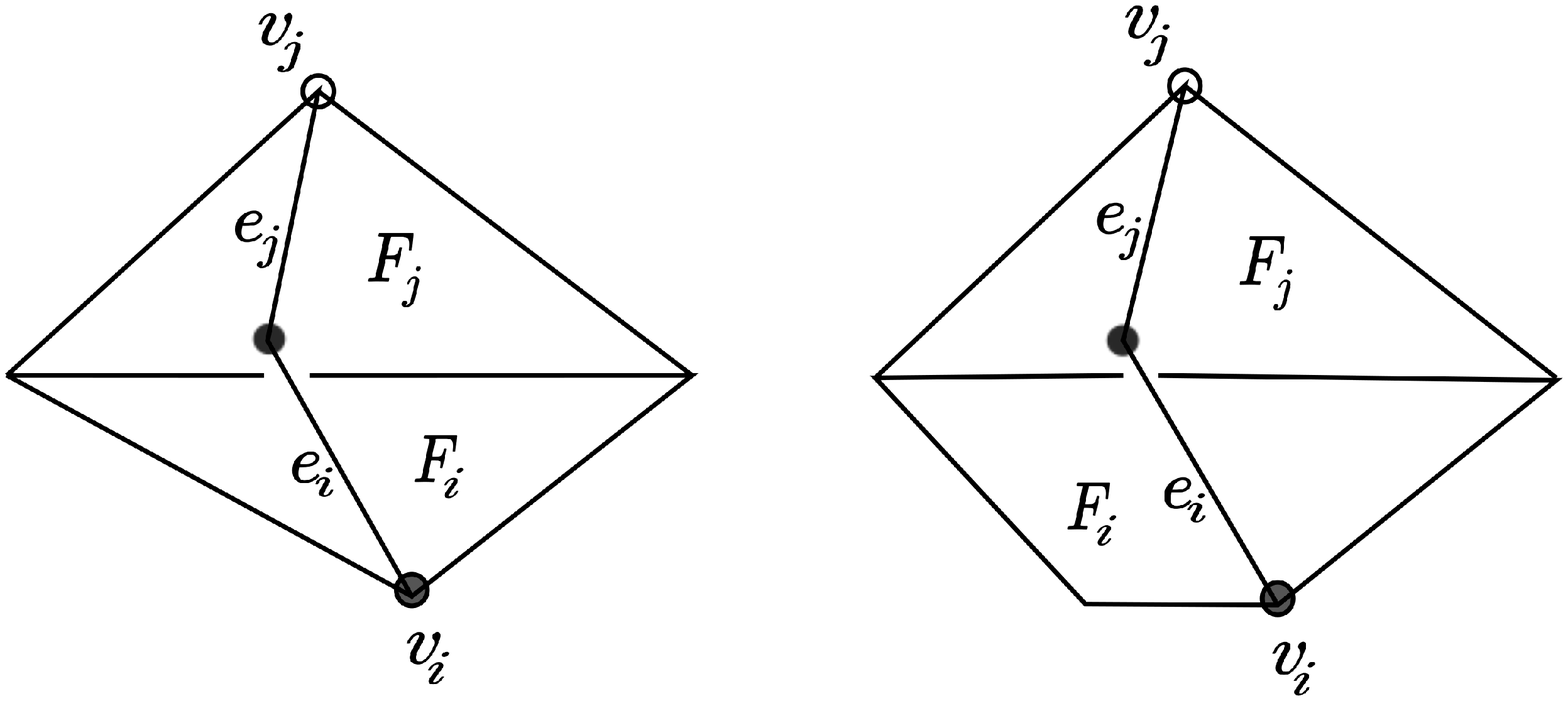}
\end{center}
\caption{Sub-graphs $\nu$ (on the left) and $\omega$ (on the right)} \label{graphsedge}
\end{figure}

\begin{figure}
\begin{center}
\includegraphics* [totalheight=5cm]{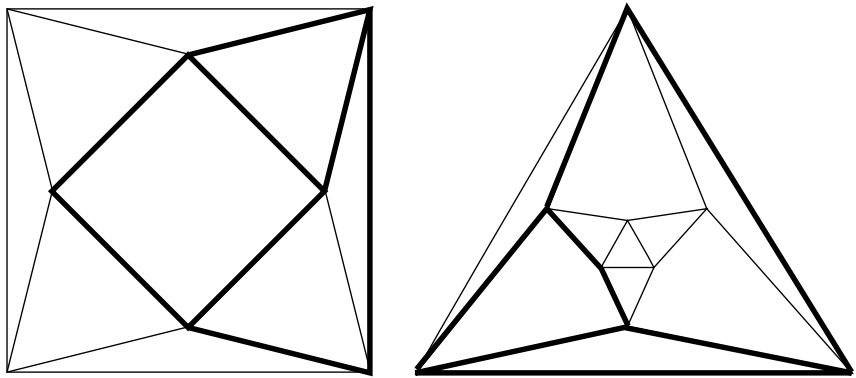}
\end{center}
\caption{Embeddings of the graph $\nu$ into octahedrite facets with $8$ (left) and $9$ (right) vertices} \label{embed1}
\end{figure}

\begin{figure}
\begin{center}
\includegraphics* [totalheight=5cm]{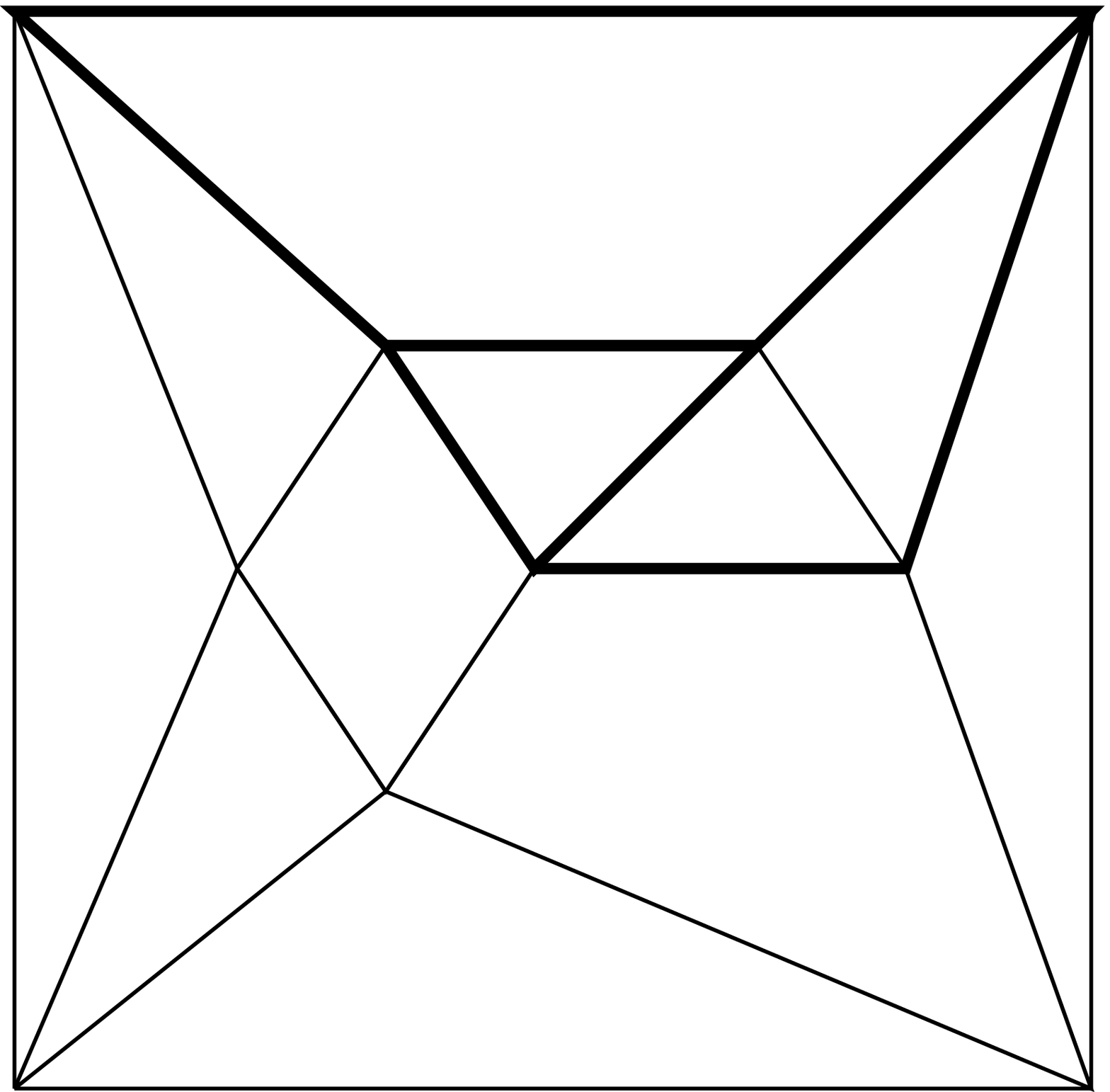}
\end{center}
\caption{An embedding of the graph $\omega$ into the octahedrite facet with $10$ vertices} \label{embed2}
\end{figure}

\newpage

\begin{figure}
\begin{center}
\includegraphics* [totalheight=5cm]{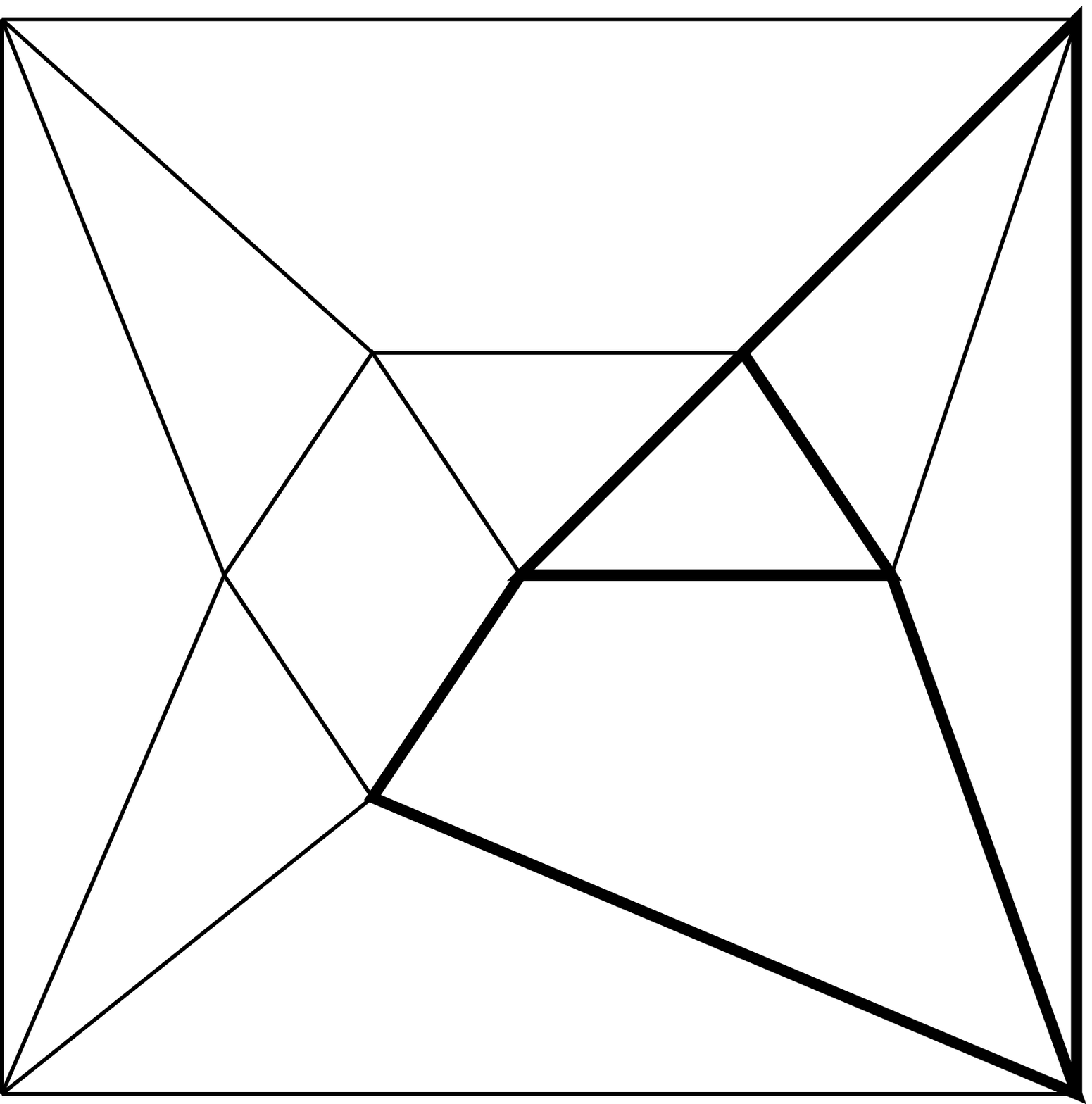}
\end{center}
\caption{An embedding of the graph $\omega$ into the octahedrite facet with $10$ vertices} \label{embed3}
\end{figure}

\begin{figure}
\begin{center}
\includegraphics* [totalheight=5cm]{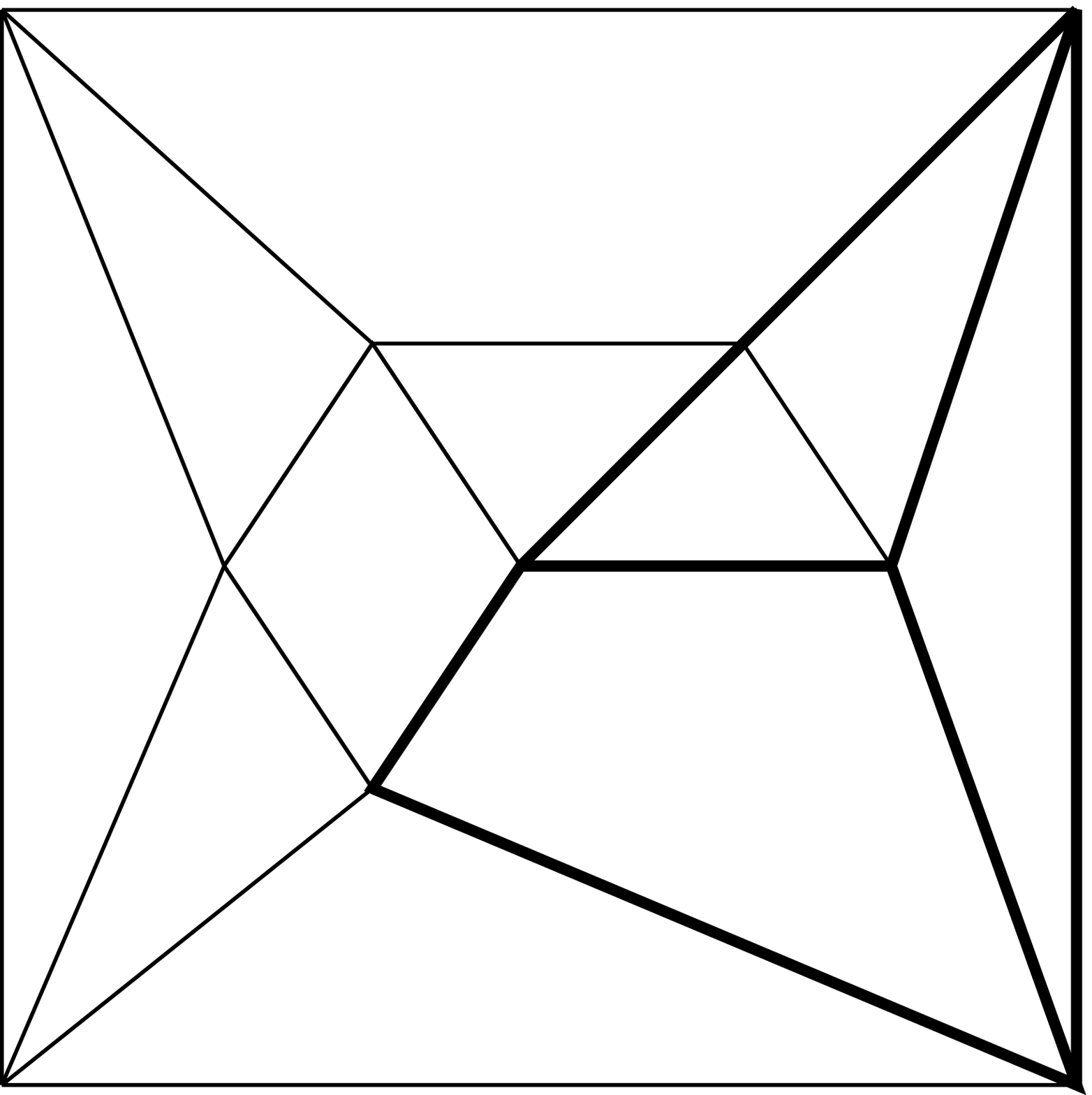}
\end{center}
\caption{An embedding of the graph $\omega$ into the octahedrite facet with $10$ vertices} \label{embed4}
\end{figure}

\begin{figure}
\begin{center}
\includegraphics* [totalheight=5cm]{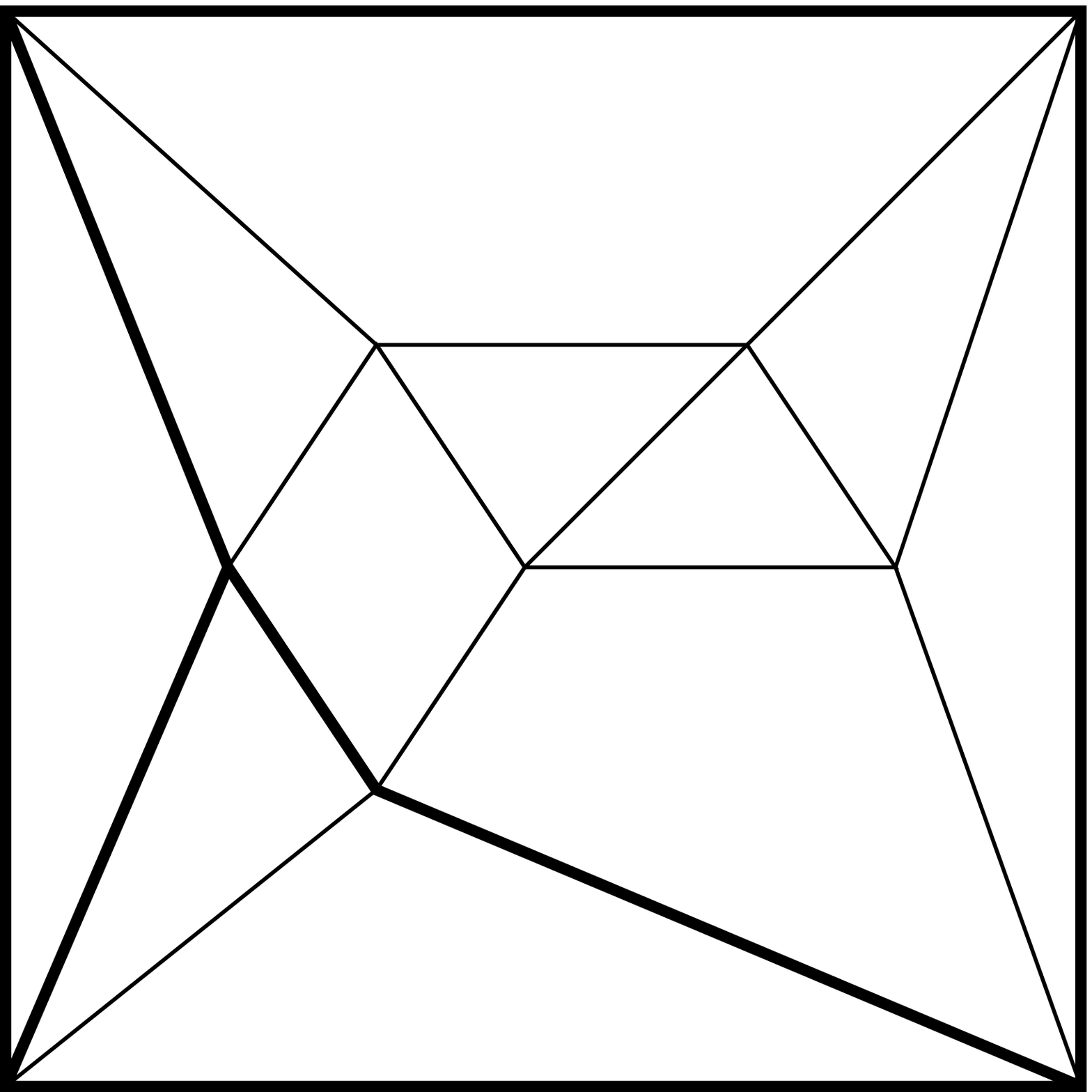}
\end{center}
\caption{An embedding of the graph $\omega$ into the octahedrite facet with $10$ vertices} \label{embed5}
\end{figure}

\begin{figure}
\begin{center}
\includegraphics* [totalheight=5cm]{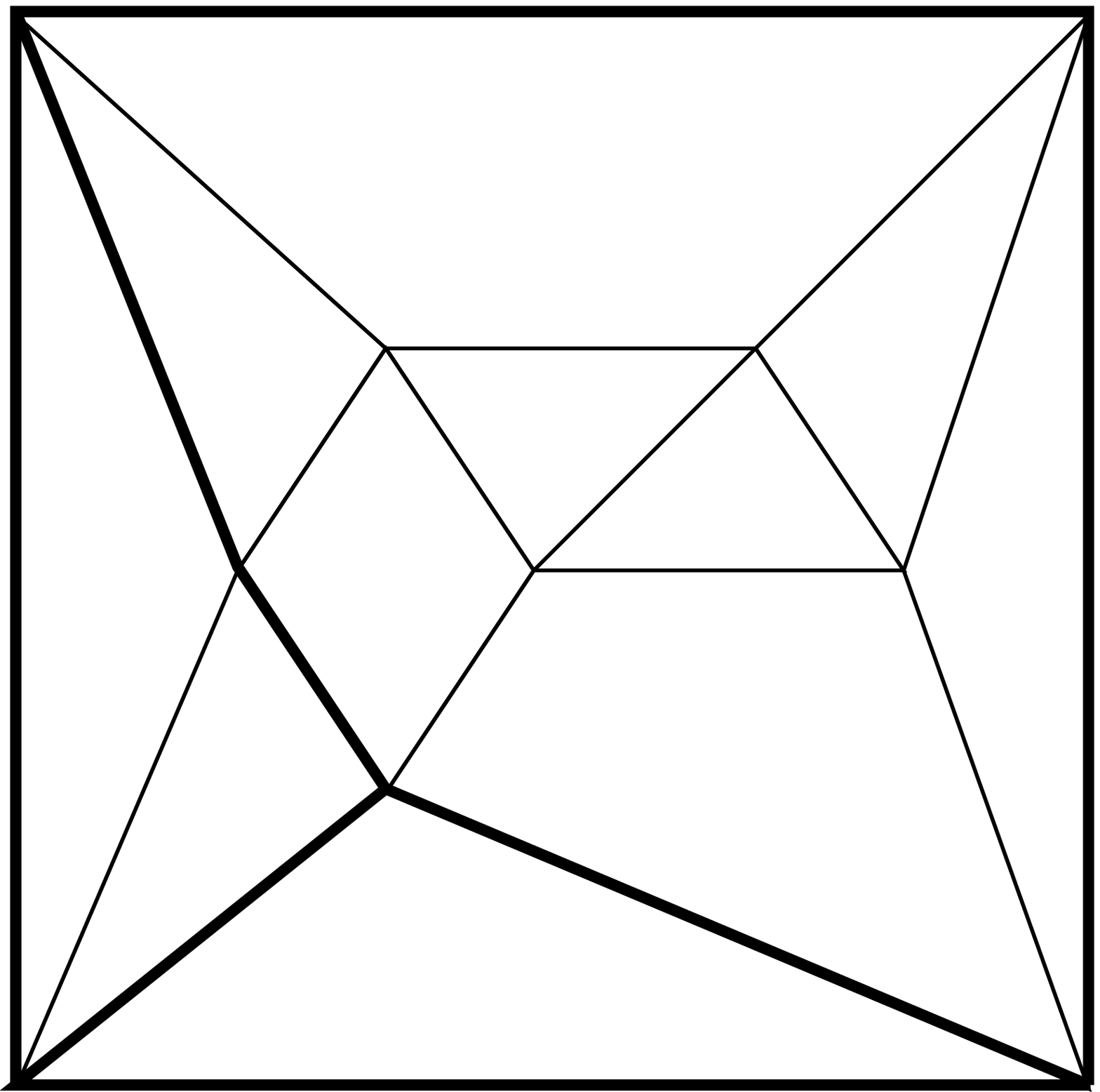}
\end{center}
\caption{An embedding of the graph $\omega$ into the octahedrite facet with $10$ vertices} \label{embed6}
\end{figure}

\begin{figure}
\begin{center}
\includegraphics* [totalheight=5cm]{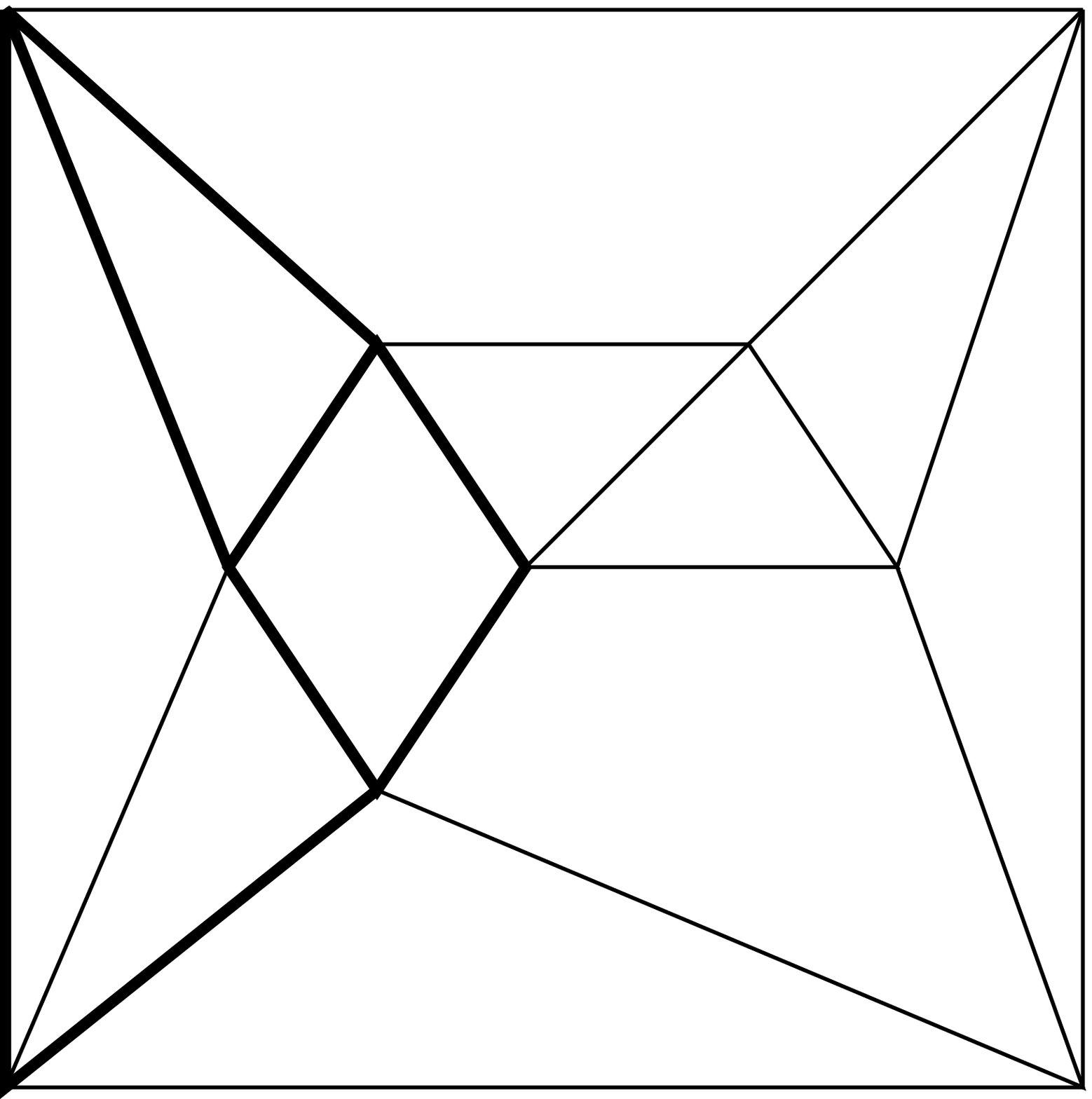}
\end{center}
\caption{An embedding of the graph $\omega$ into the octahedrite facet with $10$ vertices} \label{embed7}
\end{figure}

\newpage

\begin{figure}
\begin{center}
\includegraphics* [totalheight=5cm]{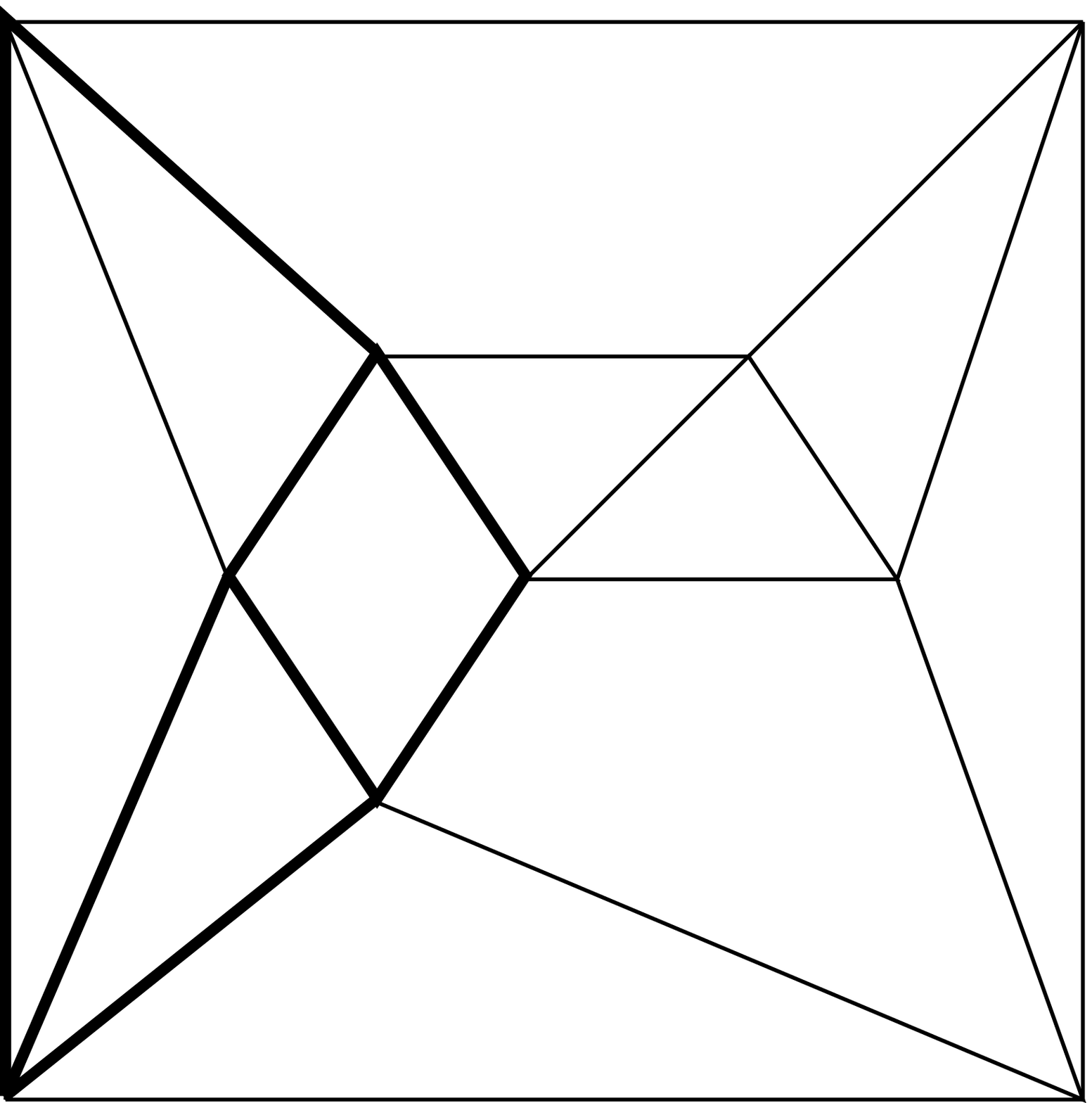}
\end{center}
\caption{An embedding of the graph $\omega$ into the octahedrite facet with $10$ vertices} \label{embed8}
\end{figure}

\begin{figure}
\begin{center}
\includegraphics* [totalheight=8cm]{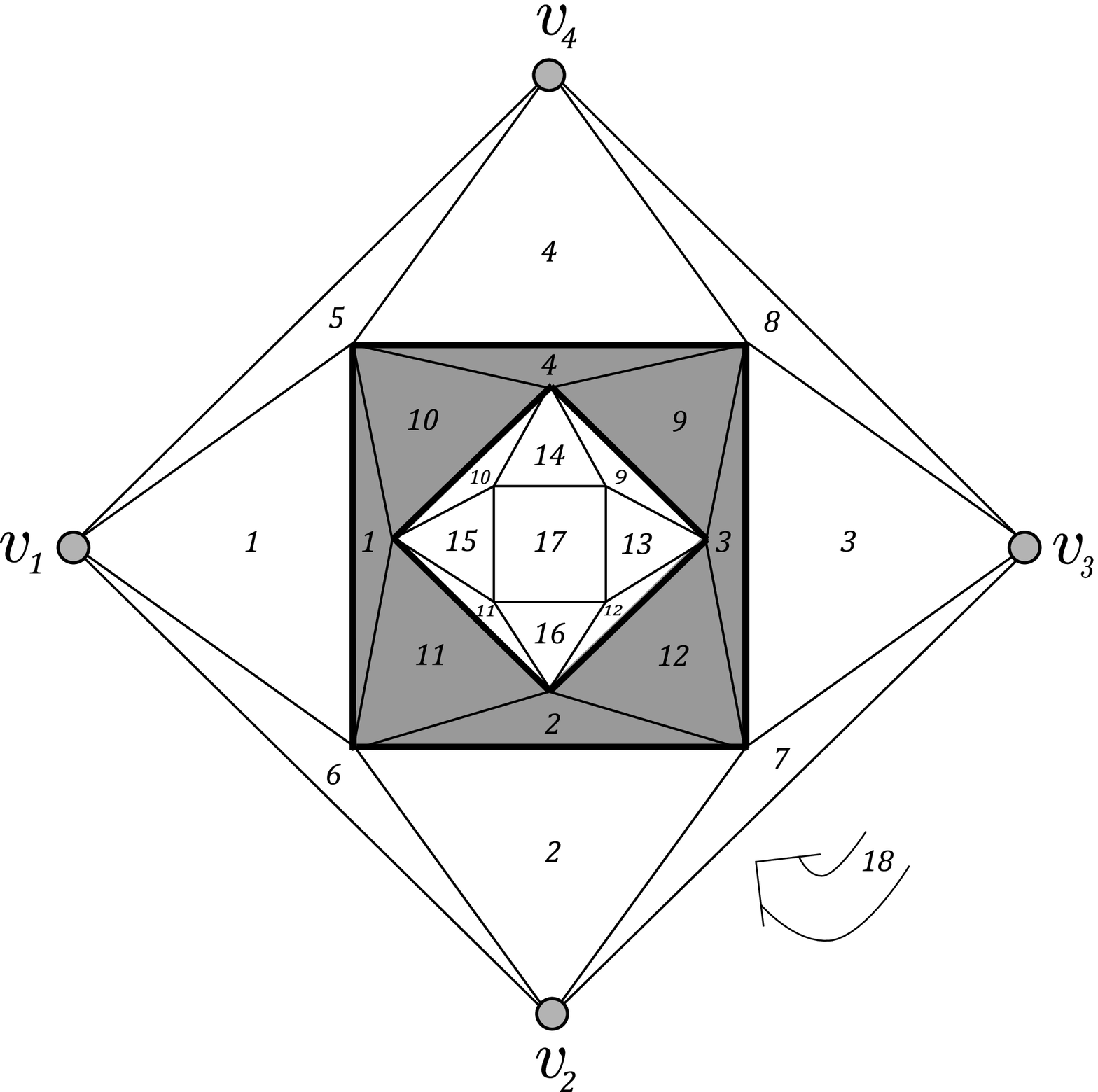}
\end{center}
\caption{Hyperbolic octahedrite with 8 vertices as a facet of $\mathscr{P}$ and its neighbours} \label{octahedrite8proof}
\end{figure}

\end{document}